\newtheorem{theorem}{Theorem}
\newtheorem{lemma}{Lemma}
\newtheorem{proposition}{Proposition}
\newtheorem{definition}{Definition}
\newtheorem{assumption}{Assumption}	
	\newtheorem{corollary}{Corollary}
\newtheorem{claim}{Claim}
 \theoremstyle{definition}
 \theoremstyle{remark}
 \numberwithin{equation}{section}
\newcommand{\vertiii}[1]{{\left\vert\kern-0.25ex\left\vert\kern-0.25ex\left\vert #1
    \right\vert\kern-0.25ex\right\vert\kern-0.25ex\right\vert}}
\newcommand{\f}[2]{\frac{#1}{#2}}
\newcommand{\cl}{{\mathcal L}}
\newcommand{\cu}{{\mathcal U}}
\newcommand{\al}{\alpha}
\newcommand{\be}{\beta}
\newcommand{\de}{\delta}
\newcommand{\ka}{\kappa}
\newcommand{\la}{\lambda}
\newcommand{\La}{\Lambda}
\newcommand{\si}{\sigma}
\newcommand{\vp}{\varphi}
\newcommand{\om}{\omega}
\newcommand{\rone}{\mathbb R}
\newcommand{\dpr}[2]{\langle #1,#2 \rangle}
\newcommand{\dprr}[2]{( #1,#2 )}
\newcommand{\eps}{\epsilon}
\newcommand{\ca}{\mathcal A}
\newcommand{\cm}{\mathcal M}
\newcommand{\ce}{\mathcal E}
\newcommand{\ch}{\mathcal H}
\newcommand{\cp}{\mathcal P}
\newcommand{\cx}{\mathcal X}
\newcommand{\p}{\partial}
\newcommand{\beq}{\begin{equation}}
\newcommand{\eeq}{\end{equation}}
\newcommand{\beqna}{\begin{eqnarray*}}
\newcommand{\eeqna}{\end{eqnarray*}}
\newcommand{\beqn}{\begin{equation*}}
\newcommand{\eeqn}{\end{equation*}}
\newcommand{\bp}{\begin{proof}}
\newcommand{\ep}{\end{proof}}
\newcommand{\bprop}{\begin{proposition}}
\newcommand{\eprop}{\end{proposition}}
\newcommand{\bt}{\begin{theorem}}
\newcommand{\et}{\end{theorem}}
\newcommand{\bex}{\begin{Example}}
\newcommand{\eex}{\end{Example}}
\newcommand{\bc}{\begin{corollary}}
\newcommand{\ec}{\end{corollary}}
\newcommand{\bcl}{\begin{claim}}
\newcommand{\ecl}{\end{claim}}
\newcommand{\bl}{\begin{lemma}}
\newcommand{\el}{\end{lemma}}
\newcommand{\cj}{{\mathcal J}}
\begin{document}

\title[Stability of   waves for the fractional KdV and NLS equations]
 {Stability of periodic   waves for the fractional KdV and NLS equations}

\thanks{Sevdzhan Hakkaev partially supported by Scientific Grant RD-08-119/2018 of Shumen University.
  Stefanov  is partially  supported by  NSF-DMS under grant \# 1614734.}

\author[S. Hakkaev]{\sc Sevdzhan Hakkaev}
\address{ Department of Mathematics and Computer Science, Istanbul Aydin University, Istanbul, Turkey}

\email{sevdzhanhakkaev@aydin.edu.tr}

\address{Faculty of Mathematics and Informatics, Shumen University, Shumen, Bulgaria}

\author[A. Stefanov]{\sc Atanas G. Stefanov}
\address{ Department of Mathematics,
University of Kansas,
1460 Jayhawk Boulevard,  Lawrence KS 66045--7523, USA}
\email{stefanov@ku.edu}

\subjclass{Primary 35Q55, 35P10}

\keywords{ Fractional KdV, factional NLS, periodic waves, stability}

\date{\today}

\begin{abstract}
We consider the focusing fractional  periodic  Korteweg-deVries (fKdV) and fractional periodic nonlinear Schr\"odinger equations (fNLS)  equations, with $L^2$ sub-critical dispersion.  In particular, this covers the case of the periodic KdV and Benjamin-Ono models.  We construct  two parameter family of bell-shaped traveling waves for KdV (standing waves for NLS), which are constrained minimizers of  the Hamiltonian. We show in particular that for each $\la>0$, there is a traveling wave solution to fKdV and fNLS  $\phi: \|\phi\|_{L^2[-T,T]}^2=\la$, which is non-degenerate and spectrally stable, as well as orbitally stable. This is done completely rigorously, without any  {\it a priori} assumptions on the smoothness of the waves or the Lagrange multipliers.

\end{abstract}

\maketitle

\section{Introduction}
  Consider the initial value problem for the  fractional periodic KdV equation
  \begin{equation}
\label{10}
\left\{\begin{array}{l}
u_t- \La^\al u_x + (u^2)_x = 0, -T\leq x\leq T,\\
u(0,x)=u_0(x)
\end{array}
\right.
  \end{equation}
  and the corresponding quadratic NLS problem,
  \begin{equation}
  \label{20}
  \left\{\begin{array}{l}
  i u_t- \La^\al u + |u| u = 0, -T\leq x\leq T.\\
  u(0,x)=u_0(x)
  \end{array}
  \right.
  \end{equation}
  Here, the fractional differentiation operator is defined, say for finite trigonometric polynomials via
  $$
  \La^\al [\sum_{k=-N}^N   a_k e^{ i \pi k \f{x}{T}}] = \sum_{k=-N}^N  \left(\f{\pi |k|}{T}\right)^\al a_k e^{ i \pi k \f{x}{T}},
  $$
  and then by extensions to all elements of $H^{\al}[-T,T]$. 
  
  In both problems, we can in principle  consider any $\al>0$, although we will see that for meaningful results, one needs to restrict to $\al >\f{1}{3}$. The cases $\al=1$ and $\al=2$ are of course classical and well-studied - these are the Benjamin-Ono and the KdV models respectively. The local and global well-posedness theory has been well developed for these standard  cases, even for very low   regularity data,  see \cite{iteam} for KdV and \cite{mol, IK}  for the Benjamin-Ono case.  For non-integer $\al$, we mention the relatively recent works \cite{KMR, HIKK} for the fKdV posed on the line, which provides global well-posedness in the energy space $H^{\f{\al}{2}}(\rone)$. Note that the flow maps in the non-local cases, i.e.  $\al<2$,  are generally not even uniformly continuous with respect to initial data. The state of affairs regarding the Cauchy problem for the fNLS, \eqref{20}  is as follows:  the local well-posedness results is addressed in \cite{MPV}, for data in $H^s, s>\f{3}{2}-\f{5\al}{4}$, while global existence is in the energy space $H^{\f{\al}{2}}(\rone), \al>\f{6}{7}$.

The existence of traveling waves (standing waves respectively) is another  aspect of the theory, as it offers important information regarding  the global dynamical properties of these models. In fact, such solutions (and the behavior of the solutions starting with data close to them) provide  the most important clues and indeed the skeleton of the full dynamic picture. This is why the problem for the existence and stability properties of traveling waves for these and related equations has played such a central role. In that regards, we mention \cite{Ben}, where the non-linear stability of the KdV  traveling waves on the line  was established. This was followed by \cite{BSS},   and the far reaching generalizations in \cite{GSS, LZ}. A very satisfactory result, including uniqueness for the line soliton, holds   for Benjamin-Ono model as well,  \cite{AT}. Some other recent results for traveling waves on the line, for the non-local models $\al<2$, as well as more general multipliers are  in \cite{PSB, LPS1}.  See also the book \cite{Pbook}, where the approach for many of  these results can be found.

 In recent years, the periodic traveling waves for these and related models, together with their stability properties,   were considered in numerous papers. Here is a  list of some recent developments, \cite{PBS, Pava, PN, HJ, LP}, which is certainly incomplete\footnote{In addition, there is quite a bit of recent works dealing with instabilities of such waves. We do not review these issues here, as our results pertain exclusively to  stability.}.  In most of these works, the waves are constructed either   variationally or through some ODE based methods\footnote{although some of these waves are in fact explicit}. The orbital stability considerations for these waves often  involve  some variation of  the Benjamin's   method, \cite{Ben}. Note that an essential ingredient in this approach is the so-called {\it non-degeneracy of the wave}, which roughly states that  the kernel of the linearized operator $\cl_+$ , see \eqref{11} and \eqref{12}below,  is spanned by $\phi'$.  This is also an important issue, which arises, when one analyzes  the uniqueness of the waves in \eqref{30} as well.   We shall provide more details about the specifics of these works, as it pertains to our contribution below, see the discussion after  Theorem \ref{theo:main}.

Let us record three important {\it formally} conserved quantities for the solutions of \eqref{10}.
\begin{itemize}
	\item the $L^2$ norm/momentum
	$$
	\cp(u)=  \int_{-T}^T u^2(x) dx
	$$
	\item the Hamiltonian
	$$
	\ch(u)=\f{1}{2} \dpr{\La^{\al/2} u}{\La^{\al/2} u} - \f{1}{3} \int_{-T}^T u^3(x)   dx
	$$
	\item the mass
	$$
	\cm(u)=\int_{-T}^T u(x) dx
	$$
\end{itemize}
while clearly only $\cp, \ch$ are conserved on the solutions of \eqref{20}. {\it Let us note however, that even for the cases where one has global well-posedness, it is generally not clear whether these quantities are actually conserved along the evolution, especially when one works with spaces with limited regularity, say $H^{\f{\al}{2}}[-T,T]$. }

For traveling waves of \eqref{10}, we take the ansatz $u(t,x)=\phi(x- \om t)$, while for \eqref{20}, we consider $u(t,x)=e^{ i \om t} \phi(x)$. In addition, we will be interested in positive solutions $\phi$ only, In the case of \eqref{10}, we obtain, after one integration,  the profile equation
\begin{equation}
\label{30}
\La^\al \phi+\om \phi - \phi^2 +a=0, -T\leq x\leq T,
\end{equation}
  where $a\in\rone$ is a constant of integration. In the case of \eqref{20}, we obtain exact same equation, but with  $a=0$. Thus, we will generally consider \eqref{30} with any $a$, and sometimes we will refer specifically to the case $a=0$ as it concerns the NLS problem \eqref{20}.
  Another helpful reduction,   that we would like to point out   is the following scaling argument.
  More specifically, using the transformation $\phi(x)= T^{-\al} \Phi(x/T)$, where $\Phi$ is now $2$ periodic, leads us to the equation
 $$
  \La^\al \Phi+T^\al \om \Phi - \Phi^2 + T^{2\al}  a =0, -1\leq y\leq 1.
 $$
  Also, a moment thought reveals  that the stability of $\Phi$ is equivalent to the stability of $\phi$, in the context of \eqref{10} or \eqref{20}. So, understanding the stability of $\Phi$, as a function of the parameters $\om, a$ on the basic interval $[-1,1]$ can leads to the answer of the stability of waves defined on any interval\footnote{So, henceforth,  without loss of generality, we shall mostly restrict our attention to the case $T=1$. } $[-T,T]$.

  We now discuss the problem for stability of the traveling waves  $\phi_\om(x-\om t)$ for \eqref{10} and the standing waves $e^{i \om t} \phi_\om$ for \eqref{20}, provided they exist. For the fKdV case, taking the ansatz
  $u=\phi_\om(x-\om t)+v(t, x-\om t)$, and for the fNLS case, we take
  $u=e^{i \om t}[\phi_\om+ v_1(t,\cdot)+iv_2(t, \cdot)]$, where $v_1, v_2$ are taken to be real-valued.  Plugging in
  \eqref{10} (\eqref{20} respectively) and ignoring $O(|v|^2)$,  leads us to the linearized systems
  \begin{eqnarray}
  \label{11}
 & &   v_t =\p_x \cl_+ v, \\
  \label{12} 
 & &  \vec{v}_t =   \left(\begin{array}{cc}
  0 & 1  \\
  -1 & 0
  \end{array}\right)   \left(\begin{array}{cc}
  \cl_+ & 0  \\
  0 & \cl_-
  \end{array}\right) \vec{v},
  \end{eqnarray}
  where the linearized operators $\cl_\pm$, take the form
  \begin{eqnarray*}
\cl_+=\La^\al+\om - 2\phi, \ \ \cl_-= \La^\al+\om - \phi, \ \ D(\cl_\pm)=H^\al.
  \end{eqnarray*}
  Passing to the eigenvalue ansatz $v\to e^{\la t} v$ yields the relevant eigenvalue problems
   \begin{eqnarray}
   \label{111}
   & & 
   \p_x \cl_+ v=\la v, \\
   \label{121}
   & & 
    \left(\begin{array}{cc}
   0 & 1  \\
   -1 & 0
   \end{array}\right)  \left(\begin{array}{cc}
   \cl_+ & 0  \\
   0 & \cl_-
   \end{array}\right) \vec{v}=\la \vec{v}.
   \end{eqnarray}
  A straightforward comparison of $\cl_\pm$ with the constant coefficient operator $\cl^0=\La^\al+\om$ and the fact that the spectra  of the operators $\p_x \cl^0$ and $\left(\begin{array}{cc}
  0 & 1  \\
  -1 & 0
  \end{array}\right)  \left(\begin{array}{cc}
  \cl^0 & 0  \\
  0 & \cl^0
  \end{array}\right)$ consist entirely of eigenvalues of finite multiplicity implies, by Weyl's criteria,  that the spectral problems \eqref{111} and \eqref{121} have their entire spectrum filled with  eigenvalues of finite multiplicity. With that in mind, we give the following definitions of stability. 
  \begin{definition}
  	\label{defi:1} 
We say that the wave $\phi_\om(x-\om t)$ is spectrally stable, with respect to perturbations of the same period, if $\si(\p_x \cl_+)\subset \{\la: \Re \la\leq 0\}$. Alternatively,  the eigenvalue problem \eqref{111} does not have solutions $(\la, v): \Re \la>0, v\in D(\p_x \cl_+), v\neq 0$. 

For the fNLS problem, the spectral stability of $e^{i \om t} \phi_\om$ is  understood as the absence of non-trivial solutions of \eqref{121}, with $\Re \la>0$. 
  \end{definition}
  The stronger notion of orbital stability for fKdV will be useful in the sequel. As we have mentioned above, the results in this direction are conditional upon a  well-posedness results, in addition to actual conservation of the momentum $\cp(u)$ and the Hamiltonian $\ch(u)$. 
  \begin{assumption}
  	\label{defi:15} Let $\phi$ be a solution of \eqref{30}. Assume  that there exists $\eps_0>0$ and a metric space $(X, d_X)$, so that 
  	  $X\subset \{g\in H^{\f{\al}{2}}[-T,T]: d_X(g,\phi)<\eps_0\}$, with the following properties: 
  	  \begin{enumerate}
  	  	\item The solution map $g\to u_g$ is locally in time continuous in the metric $d_X$. That is, for each $\eps<\eps_0$ and $g: d_X(g, \phi)<\f{\eps}{2}$, there exists $t_0=t_0(g)>0$, so that 
  	  	$\sup_{0<t<t_0} d_X(u_g(t), \phi)<\eps$. 
  	  	\item All initial data $g\in X$ generates a global in time solution $u_g$ of  \eqref{10}, so that \\ $g\in C((0,\infty)), H^{\f{\al}{2}}[-T,T])$. 
  	  	\item For all  $0<t<\infty$, there is the conservation of momentum, Hamiltonian and mass 
  	  	$$
  	  	\cp(u_g(t, \cdot))=\cp(g), \ch(u_g(t, \cdot))=\ch(g), \cm(u_g(t, \cdot))=\cm(g),
  	  	$$
  	  \end{enumerate}
  \end{assumption}
  Loosely speaking, we require relatively strong well-posedness result to hold in a suitable subspace of $H^{\f{\al}{2}}[-T,T]$, so that the relevant conserved quantities $\cp, \ch, \cm$ are conserved along the evolution. For example, a global well-posedness in a space of sufficiently high regularity, say $H^3$,  would  be ideal, since then, the solutions to \eqref{10} are classical and the conservation laws calculations are justified. This holds for example, in the cases of KdV and Benjamin-Ono,  but one then is restricted to taking only perturbations $u_0$, which are sufficiently smooth. 
  One should also remember   that generally speaking, in the cases $\al<2$, the data-to-solution map is not uniform continuous in the scale of the Sobolev spaces (of any order!).  This idiosyncrasy of the model \eqref{10} is not consequential for our results, as we only focus on having global unique solutions, which necessarily conserve  $\cp, \ch, \cm$.  
  
  Our next definition is about the orbital stability of the waves. 
  \begin{definition}
  	\label{defi:17}
  	 We say that $\phi$ is  orbitally stable, if for every $\eps>0$,  
  	 there exists $\de>0$, so that whenever $u_0\in X$, $\|u_0-\phi\|_{H^\f{\al}{2}}<\de$ and $u_0$ is real-valued, then the solution $u$ is globally in $H^{\f{\al}{2}}[-T,T]$ and 
  	 $$
  	 \sup_{t>0} \inf_{y\in [-T,T]}  \|u(t,\cdot+y)-\phi(\cdot)\|_{H^\f{\al}{2}[-T,T]}<\eps.
  	 $$
  	 Similarly, orbital stability for fNLS is  for every $\eps>0$, there is $\de>0$, so that whenever the initial data is   	 $u_0\in X, \|u_0-\phi\|_{H^\f{\al}{2}}<\de$,   there is a global solution 
  	 $u(t, \cdot)$,  so that 
  	 $$
  	 \sup_{t>0} \inf_{y\in [-T,T], \theta\in [0,2\pi]}   \| u(t,\cdot+y)- e^{i \theta} \phi_\om(\cdot)\|_{H^\f{\al}{2}[-T,T]}<\eps.
  	 $$
  \end{definition}
   We are now ready to state the main results of this paper.
  \begin{theorem}
  	\label{theo:main}
  	Let $\al \in (\f{1}{2},2]$,  $T>0$. Then,   for each $\la>0$ and $a\in\rone$, there is a bell-shaped and classical  solution of \eqref{30}, $\phi_{\la,a} \in H^{\infty}[-T,T]$, where  $\om=\om(\la, a,  \phi_{\la,a})$ and 
  	$\int_{-T}^T \phi^2(x) dx=\la$. 
  	
  	In addition, the  corresponding traveling wave solutions $\phi_{\om_\la} (x-\om_\la t)$ of the fKdV equation, \eqref{10} are  non-degenerate, when $a\neq \f{\la}{2T}$ and spectrally stable, in the sense of Definition \ref{defi:1}. 
  	 Moreover, assuming global well-posedness as in Assumption \ref{defi:15}, the waves are also  orbitally stable,  when $a\neq \f{\la}{2T}$. 
  	
  	Similarly, for $a=0$,  the standing wave solutions $e^{i \om_\la t} \phi_{\om_\la}$ of the  fNLS, \eqref{20} are  non-degenerate  and spectrally stable. Under  Assumption  \ref{defi:15},   one can upgrade the statements to orbital stability. 
  \end{theorem}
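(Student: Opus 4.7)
The plan is to construct $\phi_{\la,a}$ as a constrained minimizer and then read off every conclusion from the variational characterization. For fixed $\la>0$ and $a\in\rone$, I would consider
\[
m(\la,a) := \inf\{\ch(u)+a\,\cm(u) : u\in H^{\al/2}[-T,T],\ \cp(u)=\la\}.
\]
Since $\al>\tfrac{1}{2}$, the embedding $H^{\al/2}[-T,T]\hookrightarrow L^3[-T,T]$ is compact, and in tandem with Gagliardo--Nirenberg on the torus this renders $\ch+a\,\cm$ coercive on the sphere $\{\cp(u)=\la\}$. A standard direct method then produces a minimizer: a minimizing sequence is bounded in $H^{\al/2}$, and along a subsequence it converges weakly in $H^{\al/2}$ and strongly in $L^3$, so lower semicontinuity of $\|\La^{\al/2}u\|^2$ combined with strong convergence of $\int u^3$ yields attainment. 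The Euler--Lagrange equation is exactly \eqref{30}, with $\om=\om(\la,a)$ arising as the Lagrange multiplier of the $\cp$-constraint.

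I would next show that $\phi$ may be taken positive, even, and bell-shaped, and that it is smooth. Replacing $\phi$ by $|\phi|$ is admissible by the Kato inequality $\|\La^{\al/2}|\phi|\|_{L^2}\le \|\La^{\al/2}\phi\|_{L^2}$, available for $\al\in(0,2]$ via the positive Bochner subordination kernel for $\La^\al$. A Polya--Szeg\H{o} inequality for the fractional Laplacian on the circle, together with the fact that symmetric decreasing rearrangement preserves every $L^p$ norm, shows that symmetrization does not increase the functional; hence $\phi$ is bell-shaped. Smoothness follows by bootstrap in $\La^\al\phi=\phi^2-\om\phi-a$: each iteration gains $\al$ derivatives, so $\phi\in H^\infty[-T,T]$.

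The analytically most delicate step is the non-degeneracy $\ker\cl_+=\mathrm{span}\{\phi'\}$ when $a\neq \la/(2T)$; one inclusion is the translation identity $\cl_+\phi'=0$. For the converse I would split $\ker\cl_+$ along the reflection symmetry $x\mapsto -x$. On the odd subspace, $\phi'$ has a single sign change in $(-T,T)$, and a Sturm-type oscillation argument for the periodic fractional Schr\"odinger operator $\cl_+$, accessible through the Frank--Lenzmann harmonic extension of $\La^\al$ to a half-cylinder, identifies $0$ as a simple second eigenvalue in the odd sector. On the even subspace I exploit the variational characterization: the minimizer property on the sphere $\cp=\la$ forces $\cl_+\ge 0$ on $\{h:\dpr{\phi}{h}=0\}$, so any even kernel element is proportional to $\phi$ modulo that constraint. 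Integrating \eqref{30} over $[-T,T]$ yields $\om\,\cm(\phi)=\la-2Ta$, so $\om\neq 0$ precisely when $a\neq \la/(2T)$, and under this condition the algebraic identities $\cl_+\phi=-\phi^2-a$ and $\cl_+[1]=\om-2\phi$ combine to rule out any nontrivial even kernel.

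Once non-degeneracy is in hand, spectral stability for fKdV follows from the Morse-index-one structure of $\cl_+$ via a Vakhitov--Kolokolov/Lin--Zeng type criterion on $\partial_x\cl_+$, with the favorable sign of $\dpr{\cl_+^{-1}\phi}{\phi}$ dictated by the minimizer property and made explicit by differentiating $m(\la,a)$ in $\la$ and using the scaling $\phi(x)=T^{-\al}\Phi(x/T)$. The fNLS Hamiltonian matrix in \eqref{121} is then handled by the Grillakis--Shatah--Strauss/Kapitula--Kevrekidis--Sandstede instability-index count, giving spectral stability in that case as well. Finally, orbital stability under Assumption \ref{defi:15} is the classical Cazenave--Lions argument: conservation of $\cp,\ch,\cm$ turns a small perturbation of $\phi$ into a minimizing sequence for $m(\la,a)$, and by the relative compactness established in the existence step every such sequence converges, up to translation (and phase, in the fNLS case), to the orbit of $\phi$. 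The main obstacle I expect is the non-degeneracy step, because rigorous oscillation results for non-local operators on the torus are considerably subtler than their real-line or local counterparts and must be established without the usual ODE machinery.
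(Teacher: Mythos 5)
Your overall architecture (constrained minimization at fixed $\cp=\la$, Polya-Szeg\"o symmetrization, bootstrap regularity, fractional Sturm--Liouville oscillation theory for the kernel, index counting for spectral stability) matches the paper's. But at the two places the paper identifies as the crux, your sketch glosses over steps that would fail as described. First, non-degeneracy: your even-sector argument never establishes the \emph{weak} non-degeneracy $\vp\perp Ker[\cl_+]$, which is the real obstruction. Knowing $\cl_+\geq 0$ on $\{\vp\}^\perp$ does not by itself exclude an even kernel element $h$ with $\dpr{h}{\vp}\neq 0$; to do so one needs $\dpr{\cl_+\vp}{\vp}<0$. By \eqref{50} this quantity equals $-\int\vp^3-a\int\vp$, which is obviously negative only for $a\geq 0$; for $a<0$ the paper must prove $m(0+)=0$, the a.e.\ identity $m'(\la)=-\f{\om_\la}{2}$, and the monotonicity of $\la\mapsto\om(\la,\vp_\la)$ \emph{across possibly different minimizers} to conclude $\f{1}{3}\int\vp^3+a\int\vp\geq 0$ and hence the strict sign. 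None of this is in your proposal. Relatedly, obtaining the Vakhitov--Kolokolov sign ``by differentiating $m(\la,a)$ in $\la$'' presupposes that $\om$ is a well-defined differentiable function of $\la$ alone — but the Lagrange multiplier may a priori depend on the particular minimizer, $m$ is only locally Lipschitz and concave (hence differentiable merely a.e.), and the continuity of $\la\mapsto\vp_\la$ is an open problem the paper explicitly refuses to assume. Your even-kernel exclusion is also under-specified: the paper's actual mechanism is that a putative even kernel element $\Psi$ has exactly two zeros $\pm b$ by the fractional oscillation theorem, and the test function $Q=\vp^2-\vp(b)\vp$ — orthogonal to $Ker[\cl_+]$ thanks to $\cl_+[1]=\om-2\vp$ (which needs $\om\neq 0$, i.e.\ $a\neq\la/(2T)$), $\cl_+[\vp]=-\vp^2-a$, and weak non-degeneracy — has the same sign as $\Psi$ pointwise, a contradiction; merely asserting that the identities ``combine to rule out'' the kernel skips the argument.

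Second, orbital stability: the Cazenave--Lions compactness argument proves stability of the \emph{set} of constrained minimizers at level $\la$, not of the orbit $\{\vp(\cdot-r)\}$ of the particular wave. Since uniqueness of the minimizer (even up to translation) is not known here, your argument does not deliver Definition \ref{defi:17}: the perturbed solution could drift toward a different minimizer with the same $L^2$ norm. The paper instead runs a Benjamin/Grillakis--Shatah--Strauss coercivity argument with a modulation parameter $\be(u)$, using $\cl_+|_{span\{\vp,\vp'\}^\perp}\geq\ka>0$ (this is where the strong non-degeneracy is consumed) to control $\|u(t,\cdot+\be)-\vp\|_{H^{\al/2}}$ directly by the conserved quantities via a continuity-in-time bootstrap; this yields individual-orbit stability without any continuity of $\la\mapsto\vp_\la$. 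If you want to keep the Cazenave--Lions route, you must either prove uniqueness of the minimizer or weaken the conclusion to stability of the minimizing set, neither of which matches the theorem as stated.
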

  {\bf Remarks:} 
  \begin{itemize}
  	\item The maps $a\to \om(a), a\to \phi_a$ are certainly of interest (for example continuity, differentiability and monotonicity  properties etc.),  but they will not be a subject of our investigation, so we will henceforth drop it from our notation. 
  	\item The restriction $a\neq \f{\la}{2T}$ is likely an artifact of the argument, but we cannot remove it for now. 
  	\item It is somewhat implicit in the statement that the wave speed $\om$ may depend on the particular solution $\phi$. To clarify this important  point,  we cannot rule out a scenario where   for a given $(\la, a)\in \rone_+\times \rone$, there are two waves $\phi, \tilde{\phi}:\|\phi\|^2=\la=\|\tilde{\phi}\|^2$ satisfying \eqref{30}, with $\om(\la,a,\phi,)\neq \om(\la,a,\tilde{\phi})$.
  	\item In relation to the previous point, $\la\to \om(\la)$ may be a multi-valued mapping. On the other hand, in Proposition \ref{prop:11} below, we clarify that on a full measure subset $\ca\subset \rone_+$, $\om_\la$ is independent on the waves of our construction.  
  \end{itemize}
  
  We should mention that periodic  waves, in the fKdV context, were previously constructed in \cite{HJ}. In this work, the authors have used different variational construction, namely they construct the solutions  subject to the constraint $\int_{-T}^T \phi^3(x) dx= const.$, which is why they can get to the larger range $\al>\f{1}{3}$. In the stability arguments,  the authors tacitly assume smoothness of the Lagrange multipliers\footnote{while on a more basic level, and as was discussed above, it is not at all  clear why these multipliers are independent on the particular constrained minimizers} on the constraints, which simplifies matters quite a bit. Using the assumed smoothness, they show the orbital stability of the waves.

  Our approach does not make use of any such assumptions. In fact, let us give  an informal preview  to our existence and  stability results, together with the difficulties associated with various steps in the proof.  We construct first, for each $\la>0$, normalized waves, that is functions  that minimize the modified energy $\ch(u)+ a M(u)$ for fixed $L^2$ norm, $\|\phi\|_{L^2}^2=\la$, see Proposition \ref{prop:10} below.  This procedure generates  bell-shaped functions, with   speeds $\om_{a, \la, \phi}$ as Lagrange multipliers.  
  
  The  smoothness (or even continuity) of the map $(a, \la)\to \om_{a, \la}$ is a highly non-trivial issue. In fact, we show that $\la\to \om_{a, \la}$ is non-decreasing, while the continuity and differentiability of this map remains an open question.    Even more dramatically, the continuity, let alone the differentiability,  of the Banach space valued mapping $\la\to \phi_\la$ remains an open and very challenging  question. This is often an assumption, see  \cite{Pbook} and also $(5.2.47)$ on p. 139 in \cite{Kap} where this is explicitly required.  The  issue was  sidestepped as an obvious one in previous publications.  While we accept that the continuity and even differentiability is very likely true, 
  we would want to reiterate the fact that it  is not obvious, except in the cases with scaling (i.e. when the problem is posed on $\rone$, instead of $[-T,T]$), in which the relation $\om\to \phi_\om$ is explicit.  
  
  While we do not make any continuity/differentiability assumptions of the sort,  we certainly would benefit from such smoothness properties. In fact, we prove some very modest results along these lines, see  Proposition \ref{prop:11} and Lemma \ref{le:65} below, which however turn out to suffice  for our purposes.   For example, a  key step in the argument, is the {\it weak non-degeneracy} of $\phi$, i.e. $\phi\perp Ker[\cl_+]$.  Note that this is  trivial\footnote{Indeed, taking {\it formally} derivatives in $\om$ in \eqref{30} leads to $\cl_+[\p_\om \phi]=-\phi$, whence $\phi\perp Ker[\cl_+]$.  }, if one assumes the $H^1$ smoothness of the map  $\om\to \phi_\om$.    With the non-degeneracy at hand, one proceeds to establish  that the waves are non-degenerate, in the sense that $Ker[\cl_+]=span[\phi']$. This is then a crucial piece of information, which is needed in the proof of orbital stability for  these waves. 
  
  The   paper is organized as follows. In Section \ref{prelim}, we first show that the distributional solutions of \eqref{30} are in fact $H^\infty$. In Section \ref{sec:2.2}, we introduce the basics of the Hamilton-Krein index theory, which culminates in an easy to apply Corollary \ref{yuri}, which allow us, in certain cases, to conclude spectral stability based on the Vakhitov-Kolokolov criteria. This is followed by a few useful lemmas, in particular the Sturm-Liouville theory in the fractional case, see Lemma \ref{hmj}.  In Section \ref{sec:3}, we present the variational construction, together with a selection of additional spectral properties for the operators $\cl_\pm$, as well as properties of the Lagrange multipliers $\om_\la$.  In Section \ref{sec:4} we show the non-degeneracy of the waves - the proof proceeds in two steps, first we establish in Lemma \ref{weaknond} the weak non-degeneracy, using properties of the functions $m, \om$. Next, we use the Sturm-Liouville theory available in this case to upgrade this to strong non-degeneracy - see Lemma \ref{sl} and the final stages of the proof immediately after. We finish this section by establishing spectral stability for the waves - note that while the orbital (nonlinear) stability results in the next section are stronger, they do require {\it a priori} well-posedness assumptions. Finally, in Section \ref{sec:5}, we show the orbital stability of the waves, both for fKdV and fNLS. Note that for that part, we employ a direct contradiction argument that  does not require continuity of the maps $\om\to \phi_\om$ or $\la\to \phi_\la$, as this is an open question as of this writing.

  \section{Preliminaries}
  \label{prelim}
  Introduce the Lebesgue and Sobolev spaces as usual, $\|f\|_{L^p[-T,T]}=\left(\int_{-T}^T |f(x)|^p dx\right)^{1/p}$, $1\leq p<\infty$. For the Fourier coefficients, taken $\hat{f}(k):=\f{1}{\sqrt{2 T}} \int_{-T}^T f(x) e^{- i \pi k  \f{x}{T}} dx$, one can define the $H^s $ norms via the standard
  $$
  \|f\|_{H^s} = \left(\sum_{k=-\infty}^ \infty (1+|k|^2)^s  |\hat{f}(k)|^2 \right)^{\f{1}{2}}. 
  $$
  Also, introduce $H^\infty=\cap_{k=1}^\infty H^k$. 
  Here is an interesting Sobolev embedding, which will be useful for us, see Lemma \ref{le:se} below, 
  \begin{equation}
  \label{sem} 
  \|f\|_{H^{-a}[-T,T]}\leq C_{a, T} \|f\|_{L^1[-T,T]},
  \end{equation}
  whenever $a>\f{1}{2}$. Indeed, we have 
  \begin{eqnarray*}
  \left(\sum_k \f{|\hat{f}(k)|^2}{<k>^{2 a}}\right)^{1/2}  \leq 
  C_a \sup_k |\hat{f}(k)| \le C_a \|f\|_{L^1[-T,T]}.
  \end{eqnarray*}
  \subsection{A posteriori  smoothness of weak solutions of fractional elliptic equations}
  In this section, we show that predictably, weak solutions to elliptic equations must be smoother  than initially required, as to be solutions in a stronger sense. We work with the underlying elliptic equation \eqref{30}, but one can easily extend the results below, by simply following our scheme.
  \begin{definition}
  	\label{defi:10} We say that $\phi\in L^2[-T,T]$ is a distributional  solution of \eqref{30}, if for every test function $h\in H^\infty[-T,T]$, one has the identity 
  	$$
  	\dpr{\phi}{\La^\al h}+ \om \dpr{\phi}{h}-\dpr{\phi^2}{h}+a\dpr{1}{\phi}=0.
  	$$
  	Note that $\dpr{\phi^2}{h}$ makes sense, since $\phi^2\in L^1$, while $h\in L^\infty$. 
  	
  \end{definition}
\noindent  We have the following {\it a posteriori} smoothness result.
  \begin{proposition}
  	\label{prop:apost}
  Let $\al>\f{1}{2}$. Then,  the distributional solutions $\phi$ of \eqref{30} belong to $H^\infty([-T,T])$.
  \end{proposition}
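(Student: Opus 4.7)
I would prove this by a bootstrap argument, feeding the rearranged equation $\La^\al \phi = \phi^2 - \om \phi - a$ (which holds distributionally in the sense of Definition \ref{defi:10}) back into itself to iteratively improve the regularity of $\phi$.

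\emph{Base case.} Since $\phi \in L^2[-T,T]$, we have $\phi^2 \in L^1$, and the embedding \eqref{sem} gives $\phi^2 \in H^{-1/2 - \eps}$ for every $\eps > 0$. Because $\phi$ and the constant $a$ are already in $L^2 \subset H^{-1/2-\eps}$, the equation forces $\La^\al \phi \in H^{-1/2-\eps}$, hence $\phi \in H^{\al - 1/2 - \eps}$. The hypothesis $\al > 1/2$ enters here (and only here) to ensure the starting regularity $s_0 := \al - 1/2 - \eps$ is positive.

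\emph{Bootstrap below the algebra threshold.} Suppose inductively that $\phi \in H^s$ with $0 < s < 1/2$. By the periodic Sobolev embedding, $\phi \in L^{q}$ for $q = 2/(1-2s)$, so $\phi^2 \in L^{q/2}$. Dualizing $H^{1/2-2s+\eps} \hookrightarrow L^{(q/2)'} = L^{1/(2s)}$ yields $L^{q/2} \hookrightarrow H^{-(1/2 - 2s + \eps)}$, so the equation places $\phi$ in $H^{s'}$ with $s' = \al - 1/2 + 2s - \eps$. Iterating gives a sequence obeying $s_{n+1} \geq 2 s_n + (\al - 1/2 - \eps)$, which grows geometrically and therefore exits $(0, 1/2]$ after finitely many steps. (The endpoint $s = 1/2$ is handled by the same mechanism using $H^{1/2} \hookrightarrow L^p$ for every finite $p$, which immediately places $\phi^2$ in $L^2$ and thus $\phi$ in $H^\al$.)

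\emph{Algebra bootstrap.} Once $\phi \in H^s$ with $s > 1/2$, the algebra property of $H^s[-T,T]$ gives $\phi^2 \in H^s$, and the equation upgrades $\phi$ to $H^{s + \al}$. Iterating without further loss of derivatives, $\phi \in \bigcap_{n \geq 0} H^{s + n \al} = H^\infty([-T,T])$. The only genuine subtlety in the whole argument is the subcritical regime $0 < s \leq 1/2$, where $H^s$ fails to be an algebra and one must route through the Sobolev embedding into $L^q$ and its dual; once above the $1/2$ threshold, the scheme is a completely mechanical iteration.
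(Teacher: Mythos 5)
Your proposal is correct and follows the same basic strategy as the paper: a regularity bootstrap driven by the smoothing of the inverse of the fractional operator. Two small points of comparison. First, the paper works with the shifted operator $(\La^\al+\om+A)^{-1}$, $A=|\om|+1$, rather than inverting $\La^\al$ directly; this sidesteps the fact that $\La^\al$ annihilates constants, and it lets the authors give an explicit duality argument identifying the distributional solution $\phi$ with the formal solution $\tilde\phi=(\La^\al+\om+A)^{-1}[A\phi+\phi^2-a]$ before bootstrapping. Your version inverts $\La^\al$ on the nonzero Fourier modes implicitly; that is fine in the periodic setting (the zero mode is a harmless constant), but it is worth saying explicitly, since the identification of the distributional solution with the inverted formula is precisely the ``rigor'' the proposition is meant to supply. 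Second, your careful iteration below the algebra threshold, starting from $\phi^2\in L^1\hookrightarrow H^{-1/2-\eps}$ via \eqref{sem} and climbing through $H^{s}\hookrightarrow L^{q}$ and its dual until $s>\f12$, is actually more complete than the paper's first step, which passes rather quickly from $\phi\in L^2$ to $\tilde\phi\in H^\al$ even though $\phi^2$ is a priori only in $L^1$; your recursion $s_{n+1}\geq 2s_n+(\al-\f12-\eps)$ exits $(0,\f12]$ in finitely many steps and the subsequent algebra bootstrap matches the paper's. So the proposal is sound, and if anything fills in a step the paper compresses.
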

  \begin{proof}
  	We add  $A \phi$ to both sides of \eqref{30}, where $A$ is a large positive constant, say $A=|\om|+1$. Thus, the equation becomes
  	$A \phi+ \phi^2 - a=(\La^\al+\om +A) \phi $. Note $\si(\La^\al+\om +A)=\{\left(\f{\pi |k|}{T}\right)^\al+\om+A, k=0, \pm 1, \ldots \}\subset [1, \infty)$, whence $\La^\al+\om +A$ is invertible on $L^2[-T,T]$. Also, its inverse clearly improves the  regularity of its input by $\al$ derivatives. In other words, $((\La^\al+\om +A)^{-1}: H^s\to H^{s+\al}$. 
  	
Introduce  $\tilde{\phi}:=(\La^\al+\om +A)^{-1}[A \phi+ \phi^2 - a]$. This is of course nothing but the formal solution of \eqref{30}, that is $\phi$, but we are about to prove this rigorously. First, observe that since
$(\La^\al+\om +A)^{-1}: L^2\to H^\al$, we have that $\tilde{\phi}\in H^\al$. Then,  for every test function $h$, we have
$$
\dpr{\tilde{\phi}}{(\La^\al+\om +A)  h}=\dpr{A \phi+ \phi^2 - a}{h} = \dpr{\phi}{(\La^\al+\om +A)  h}
$$
It follows that $\tilde{\phi}=\phi$, in sense of distributions, since $(\La^\al+\om +A)(H^\infty)=H^\infty$. 
Thus, $\phi \in H^\al$. One can now bootstrap this to $H^\infty$, since once we know $\phi \in H^\al$, then $A \phi+ \phi^2 - a\in H^{\al}$, because of Sobolev embedding. But then $(\La^\al+\om +A)^{-1}: H^\al \to H^{2\al}$, so $\phi=\tilde{\phi}\in H^{2\al}$ and so on.
  \end{proof}
  Note that the variational solutions that we  produce will be distributional  solutions of \eqref{30}.   Thus, such solutions will be in the class $H^\infty$, as a consequence of the {\it a posteriori} smoothness results in Proposition \ref{prop:apost}.
  \subsection{Some basic results of the instability index theory}
  \label{sec:2.2}
  In this section, we present some results about the solvability of eigenvalue problems of the form \eqref{111} and \eqref{121}. In fact, there is a more general theory developed for more general eigenvalue problems of this type,   we shall generally follow the presentation in \cite{LZ} as it suits our purposes the best. Namely, considering an eigenvalue problem of the form
  \begin{equation}
  \label{45}
  \cj \cl f = \la f,
  \end{equation}
  under the assumptions, that there exists a real-valued Hilbert space $\cx$ (with dot product $\dprr{\cdot}{\cdot}$ and an action between $\cx$ and $\cx^*$ given by $\dpr{\cdot}{\cdot}$), so that $\cj, \cl$ are (generally) unbounded operators as follows
  \begin{itemize}
  	\item $D(\cj)\subset X^*$ and $\cj: D(\cj) \to \cx $, so that $\cj^*=-\cj$,  in the sense that for every $u,v\in D(\cj)\subset X^*$,
  	$\dpr{\cj u^*}{v^*}=-\dpr{u^*}{\cj v^*}$.
  	\item $\cl: \cx\to \cx^*$ is a bounded and symmetric operator, in the sense that for every $u,v\in \cx$, $(u,v)\to \dpr{\cl u}{v}$ is bounded and symmetric form on $\cx\times \cx$.
  	\item $Ker[\cl]$ is finite dimensional and   the following  is a $\cl$ invariant decomposition
  	$$
  	\cx=\cx_-\oplus Ker[\cl]\oplus \cx_+, n(\cl):=dim(\cx_-)<\infty,
  	$$
 where for some $\de>0$, $\cl|_{\cx_-}\leq -\de$,  $\cl|_{\cx_+}\geq \de$. That is,
 $\dpr{\cl u_-}{u_-}\leq -\de \|u_-\|^2, \dpr{\cl u_+}{u_+}\geq \de \|u_+\|^2$, for every $u_\pm \in \cx_\pm$.
 \item
 $$
 \{f\in \cx^*: \dpr{f}{u}=0, \forall u \in \cx_-\oplus   \cx_+\}\subset D(\cj).
 $$
  \end{itemize}
  Then, for the (finite dimensional) generalized kernel $gKer[\cl]:=\{u\in \cx: (\cj\cl )^k u=0, k=1,2, \ldots\}\subset Ker[\cl]$, take the complement $\cm$, that is $gKer[\cl] = Ker[\cl]\oplus \cm$. Introduce the non-negative integer
  $$
  k_0^{\leq 0}(\cl):=\max\{dim(Z): Z\  \textup{subspace of}\  \cm : \dpr{\cl z}{z}< 0, \forall z\in Z\}.
  $$
   Theorem 2.3, \cite{LZ}) asserts that,  $k_{unstable}$ - the number of real unstable eigenvalues\footnote{counted with multiplicities}  for \eqref{45}, $k_c$ - the number of unstable eigenvalues in the first quadrant,   $k_i^{\leq 0}$ is the number of purely imaginary eigenvalues $\la= i \mu, \mu>0$,  with negative Krein signature,
   \begin{equation}
   \label{a:40}
   k_{unstable}+ 2 k_c+2k_i^{\leq 0}= n(\cl)- k_0^{\leq 0}(\cl),
   \end{equation}
   see (2.9), \cite{LZ} for precise definitions. In particular, if $n(\cl)=1$ and $k_0^{\leq 0}(\cl)\geq 1$, we will be able to conclude from \eqref{a:40} that all the terms on the left are zero, hence spectral stability.

   Next, we discuss the particular setup in the cases \eqref{11} and \eqref{12} respectively. For the fKdV, that is for the spectral problem \eqref{11}, we take $\cj=\p_x, D(\cj)=H^1[-T,T]$, $\cl=\cl_+, D(\cl)=H^\al[-T,T]$. The Hilbert space $\cx:= H^{\f{\al}{2}}[-T,T]$, so that we have the required bounds $\dpr{\cl u}{v}\leq C \|u\|_\cx \|v\|_\cx$. Clearly, the other conditions will be satisfied, once we check that $Ker[\cl]$ and $\cx_-$ are finite dimensional subspaces and $Ker[\cl]\subset H^1$. 
   
   For the fNLS spectral problem \eqref{12}, we take $\cj=\left(\begin{array}{cc}
   0 & 1  \\
   -1 & 0
   \end{array}\right)$, while $\cl= \left(\begin{array}{cc}
   \cl_+ & 0  \\
   0 & \cl_-
   \end{array}\right), D(\cl)=H^\al\times H^\al$, while $\cx=H^{\f{\al}{2}}\times H^{\f{\al}{2}}$.
   
   An easy corollary of this theory is 
   \begin{corollary}
   	\label{yuri} 
   	Assume that $n(\cl_+)=1$, while $\cl_-\geq 0$. If in addition, 
   	\begin{itemize}
   		\item Weak non-degeneracy holds, i.e. $\phi\perp Ker[\cl_+]$. In particular, $\cl_+^{-1} \phi$ is well-defined. 
   		\item The Vakhitov-Kolokolov index is negative: $\dpr{\cl_+^{-1} \phi}{\phi}<0$
   	\end{itemize}
   	then the eigenvalue problems \eqref{111} and \eqref{121} are spectrally stable.     
   \end{corollary}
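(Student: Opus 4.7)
The plan is to apply the index counting identity \eqref{a:40} with the specific $\cj, \cl$ described immediately above the statement, for each of the two eigenvalue problems. Since $k_{unstable}, k_c, k_i^{\leq 0}$ are all non-negative integers, it suffices to show that $n(\cl) - k_0^{\leq 0}(\cl) \leq 0$, which will force all three counts on the left side to vanish, yielding spectral stability.

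First I would handle the fKdV problem \eqref{111}, where $\cl = \cl_+$. By hypothesis $n(\cl_+) = 1$, so the goal reduces to producing a single element $z$ in the complement $\cm$ of $Ker[\cj\cl]$ inside $gKer[\cj\cl]$ with $\dpr{\cl_+ z}{z} < 0$. The natural candidate is $z := \cl_+^{-1}\phi$, which makes sense by the weak non-degeneracy assumption $\phi \perp Ker[\cl_+]$. One checks that $\cj\cl z = \p_x \cl_+ \cl_+^{-1}\phi = \phi'$, which lies in $Ker[\cl_+]$ by translation invariance (so in particular in $Ker[\cj\cl]$), hence $(\cj\cl)^2 z = 0$ and $z \in gKer[\cj\cl]$. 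Moreover $z \notin Ker[\cj\cl]$ since $\cj\cl z = \phi' \neq 0$, so $z$ contributes a genuine direction in $\cm$. The quadratic form evaluation gives $\dpr{\cl_+ z}{z} = \dpr{\cl_+ \cl_+^{-1}\phi}{\cl_+^{-1}\phi} = \dpr{\phi}{\cl_+^{-1}\phi} < 0$ by the Vakhitov-Kolokolov hypothesis. Thus $k_0^{\leq 0}(\cl_+) \geq 1 = n(\cl_+)$, and \eqref{a:40} closes.

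For the fNLS problem \eqref{121}, the block structure gives $n(\cl) = n(\cl_+) + n(\cl_-) = 1 + 0 = 1$, using the hypothesis $\cl_- \geq 0$. Now I would verify that in the $a=0$ case relevant to fNLS, $\cl_- \phi = \La^\al \phi + \om \phi - \phi^2 = 0$, so $\phi \in Ker[\cl_-]$. I take the test element $Z := (\cl_+^{-1}\phi, 0)^T \in \cx\times \cx$. Then $\cl Z = (\phi, 0)^T$ and $\cj \cl Z = (0, -\phi)^T$; applying $\cj\cl$ once more gives $(\cj\cl)^2 Z = \cj(0, -\cl_- \phi)^T = 0$, so $Z \in gKer[\cj\cl]$, while $\cj\cl Z = (0,-\phi)^T \neq 0$ shows $Z$ represents a nontrivial class in $\cm$. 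The signature computation is again $\dpr{\cl Z}{Z} = \dpr{\cl_+ \cl_+^{-1}\phi}{\cl_+^{-1}\phi} = \dpr{\phi}{\cl_+^{-1}\phi} < 0$. Hence $k_0^{\leq 0}(\cl) \geq 1 = n(\cl)$, and \eqref{a:40} forces spectral stability.

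The only mildly delicate point I anticipate is making sure the abstract framework from \cite{LZ} really applies, i.e.\ that the invariant decomposition $\cx = \cx_-\oplus Ker[\cl]\oplus \cx_+$ with $n(\cl) < \infty$ and finite dimensional $Ker[\cl]$ is in place, and that the test element $\cl_+^{-1}\phi$ is sufficiently regular to lie in the appropriate domain. This is however already built into the setup that the authors have arranged in Section~\ref{sec:2.2} preceding the corollary; with the weak non-degeneracy assumption ensuring that $\cl_+^{-1}\phi$ is well defined in the orthogonal complement of $Ker[\cl_+]$, and with $\cl_+^{-1}$ mapping $L^2$ into $H^\al$ by elliptic regularity, the candidate element lies comfortably in the functional framework. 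The rest of the argument is the bookkeeping described above.
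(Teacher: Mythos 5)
Your proposal is correct and follows exactly the route the paper intends: the corollary is stated as an immediate consequence of the index formula \eqref{a:40}, with the sentence preceding it ("if $n(\cl)=1$ and $k_0^{\leq 0}(\cl)\geq 1$, \ldots all the terms on the left are zero") serving as the paper's entire proof, and your choice of test elements $\cl_+^{-1}\phi$ and $(\cl_+^{-1}\phi,0)^T$ is the standard way to realize $k_0^{\leq 0}(\cl)\geq 1$ from the Vakhitov--Kolokolov condition. The only detail worth noting is that your fNLS computation uses $\cl_-\phi=0$, i.e.\ the case $a=0$, which you correctly flag as the only case in which \eqref{121} is invoked.
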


  \subsection{A few  useful lemmas}
  In this section, we present some  lemmas, which will be used in the sequel. They are unrelated, so we put them in the order in which they are referred to  in the text. 
  
  The generalized Polya-Szeg\"o inequality is standard for the functions on $\rone$, and it states that among all functions, the decreasingly rearranged ones have the smallest $H^{\be}$ norms, as long as $\be\in (0,1]$. 
  We need such result for periodic functions,  one can find it for example in \cite{CJ}, Lemma A.1. 
  \begin{lemma}
  	\label{Szego}[Generalized Polya-Szeg\"o inequality]
  	For any $\be\in (0,1]$,
  \begin{equation}
  \label{70}
  	\int_{-1}^1 |\La^\be u(x)|^2 dx\geq \int_{-1}^1 |\La^\be u^*(x)|^2 dx.
  \end{equation}
  	That is, whenever $u\in H^\be[-1,1]$, then $u^*\in H^\be[-1,1]$ and in addition, \eqref{70} holds. Equality is achieved only when $u$ is bell-shaped, i.e. $u=u^*$.
  \end{lemma}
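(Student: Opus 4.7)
The plan is to realize the seminorm $\|\La^\be u\|_{L^2}^2$ as a superposition of quadratic forms whose kernels are positive and symmetric-decreasing on the circle $[-1,1]$, and then apply the torus version of the Riesz rearrangement inequality slice by slice. This is the standard strategy on $\mathbf R$ (Lieb--Loss), and I would adapt it to the periodic setting.

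For $\be\in(0,1)$ I would start from the subordination identity
\begin{equation*}
\|\La^\be u\|_{L^2[-1,1]}^2 \ =\ c_\be\int_0^\infty t^{-1-\be}\bigl(\|u\|_{L^2}^2 - \dpr{e^{t\De}u}{u}\bigr)\,dt,
\end{equation*}
which reduces, via Parseval, to the one-variable identity $(\pi|k|)^{2\be}=c_\be\int_0^\infty t^{-1-\be}(1-e^{-(\pi k)^2 t})\,dt$ for a fixed $c_\be>0$. The associated periodic heat kernel $p_t(z)=\sum_{n\in\mathbf Z}(4\pi t)^{-1/2}e^{-(z+2n)^2/(4t)}$ implementing $e^{t\De}$ on $[-1,1]$ is strictly positive, even in $z$, and strictly decreasing in $|z|$ on $[-1,1]$ for every $t>0$, so it qualifies as a symmetric-decreasing kernel on the circle.

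The main ingredient is then the Riesz-type rearrangement inequality on the circle: for $u\ge 0$ and any symmetric-decreasing kernel $K$ on $[-1,1]$,
\begin{equation*}
\int_{-1}^{1}\!\!\int_{-1}^{1} K(x-y)u(x)u(y)\,dx\,dy\ \le\ \int_{-1}^{1}\!\!\int_{-1}^{1} K(x-y)u^*(x)u^*(y)\,dx\,dy.
\end{equation*}
Combined with $\|u\|_{L^2}=\|u^*\|_{L^2}$, this gives $\|u\|_{L^2}^2-\dpr{e^{t\De}u}{u}\ge \|u^*\|_{L^2}^2-\dpr{e^{t\De}u^*}{u^*}\ge 0$ for every $t>0$, and integrating against the positive measure $c_\be t^{-1-\be}dt$ delivers \eqref{70} for $\be\in(0,1)$. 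The endpoint $\be=1$ is the classical Polya--Szeg\"o inequality on the circle for $\int_{-1}^{1}|u'(x)|^2dx$, and can also be recovered from the $\be<1$ case by sending $\be\nearrow 1$ on $H^1$ data, where both sides are continuous in $\be$.

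For the equality statement, if \eqref{70} is an equality then the nonnegative integrand in $t$ must vanish for a.e.\ $t>0$, forcing equality in the circular Riesz inequality for the strictly decreasing kernel $p_t$; the known equality characterization then pins $u$ down to coincide with a translate of $u^*$, and after centering $u=u^*$, i.e.\ $u$ is bell-shaped. The main technical obstacle is the circular Riesz rearrangement inequality itself, which is less elementary than its line analogue: the usual proof proceeds by a layer-cake reduction to characteristic functions of level sets, followed by a Steiner-symmetrization-type rearrangement argument on the circle. Rather than reprove it, I would import it from \cite[Lemma A.1]{CJ}.
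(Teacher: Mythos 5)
Your proposal is correct in outline, but note that it is doing something the paper does not attempt: the paper gives no proof of Lemma \ref{Szego} at all, it simply quotes the statement from \cite{CJ}, Lemma A.1. What you have written is essentially the standard argument that lives behind that citation: the subordination identity $(\pi|k|)^{2\be}=c_\be\int_0^\infty t^{-1-\be}(1-e^{-(\pi k)^2 t})\,dt$ (valid for $0<\be<1$) converts the $H^\be$ seminorm into a positive superposition of the quantities $\|u\|_{L^2}^2-\dpr{e^{t\De}u}{u}$, and the circular Riesz rearrangement inequality (Baernstein--Taylor, Friedberg--Luttinger) applied to the periodic heat kernel finishes the job. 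Three points deserve more care than you give them. First, the claim that the periodized Gaussian $p_t$ is strictly symmetric-decreasing on the half-period for \emph{every} $t>0$ is true but not immediate from term-by-term pairing of $n$ with $-n-1$ (for large $t$ the $n=0,-1$ pair goes the wrong way); a clean route is to write $p_t=p_{t/n}^{*n}$ with $t/n$ small and use that circular convolution preserves symmetric-decreasing functions, or to verify positivity of $\sum_{k\ge1}k e^{-(\pi k)^2t}\sin(\pi k z)$ on $(0,1)$ directly. Second, the equality statement is the genuinely delicate part: equality in the circular Riesz inequality with a strictly decreasing kernel characterizes $u$ only as a \emph{translate} of $u^*$, and the equality case on the circle is itself a nontrivial theorem that must be imported explicitly, not just the inequality; as stated the lemma's conclusion $u=u^*$ should anyway be read modulo translation, since both sides of \eqref{70} are translation invariant. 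Third, your Riesz step requires $u\ge0$, so the reduction $u\mapsto|u|$ (or the remark that only nonnegative $u$ occur in the variational argument) should be made explicit. With those caveats, integrating the pointwise-in-$t$ inequality against $c_\be t^{-1-\be}\,dt$ and passing $\be\nearrow1$ on the Fourier side (dominated convergence with majorant $\max(1,(\pi |k|)^2)|\hat u(k)|^2$, the endpoint equality case being the classical circular Polya--Szeg\"o inequality) gives \eqref{70}; your approach buys a self-contained proof where the paper settles for a reference.
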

 \noindent  The following lemma was proved  in \cite{S1}.
  \begin{lemma}
  	\label{7}
  	Let $f:[a,b]\to \rone$ be a continuous function, that satisfies
  	$$
  	\limsup_{\eps \to 0+} \sup_{\la\in (a,b)} \f{f(\la+\eps)+f(\la-\eps)-2 f(\la)}{\eps^2}\leq 0.
  	$$
  	Then, $f$ is concave down.
  \end{lemma}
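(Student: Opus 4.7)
The plan is to perturb $f$ by $-\eta x^2$ to convert the asymptotic $\limsup\le 0$ hypothesis into a uniformly \emph{strictly} negative second-difference bound, prove concavity of the perturbation $h_\eta$ by a direct minimum-principle argument, and then let $\eta\to 0^+$.

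Fix $\eta>0$ and set $h_\eta(x):=f(x)-\eta x^2$. The elementary identity $(\lambda+\varepsilon)^2+(\lambda-\varepsilon)^2-2\lambda^2=2\varepsilon^2$ gives
\[
h_\eta(\lambda+\varepsilon)+h_\eta(\lambda-\varepsilon)-2h_\eta(\lambda)=\bigl[f(\lambda+\varepsilon)+f(\lambda-\varepsilon)-2f(\lambda)\bigr]-2\eta\varepsilon^2.
\]
By hypothesis there is $\varepsilon_0=\varepsilon_0(\eta)>0$ such that the bracketed term is bounded above by $\eta\varepsilon^2$ uniformly in admissible $\lambda$ for all $\varepsilon\in(0,\varepsilon_0)$, and consequently
\[
h_\eta(\lambda+\varepsilon)+h_\eta(\lambda-\varepsilon)-2h_\eta(\lambda)<-\eta\varepsilon^2,\qquad 0<\varepsilon<\varepsilon_0.
\]

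I next claim $h_\eta$ is concave on $[a,b]$. Suppose not: then there exist $x_1<x_2$ in $[a,b]$ such that the continuous function $\psi(x):=h_\eta(x)-L(x)$, with $L$ the affine interpolant through $(x_i,h_\eta(x_i))$, attains a strictly negative value somewhere in $(x_1,x_2)$. Since $\psi$ vanishes at the endpoints, its minimum on $[x_1,x_2]$ is attained at an interior point $\lambda^*\in(x_1,x_2)$, and for every sufficiently small $\varepsilon>0$ the points $\lambda^*\pm\varepsilon$ still lie in $(x_1,x_2)$, so $\psi(\lambda^*\pm\varepsilon)\geq\psi(\lambda^*)$. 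Because $L$ is affine its second differences vanish, hence
\[
h_\eta(\lambda^*+\varepsilon)+h_\eta(\lambda^*-\varepsilon)-2h_\eta(\lambda^*)=\psi(\lambda^*+\varepsilon)+\psi(\lambda^*-\varepsilon)-2\psi(\lambda^*)\geq 0,
\]
contradicting the strict upper bound above once $\varepsilon<\varepsilon_0$. Therefore $h_\eta$ is concave on $[a,b]$.

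Finally, $h_\eta(x)\to f(x)$ pointwise on $[a,b]$ as $\eta\to 0^+$, so the concavity inequalities $h_\eta(tx+(1-t)y)\geq t h_\eta(x)+(1-t)h_\eta(y)$ pass to the pointwise limit and yield the same inequality for $f$, which is the desired conclusion. The one mildly delicate point in this plan is locating the interior minimum $\lambda^*$ of $\psi$ and verifying that $\lambda^*\pm\varepsilon\in[a,b]$ for small $\varepsilon$; this is however immediate since $\lambda^*$ is strictly separated from the endpoints of $[x_1,x_2]\subseteq[a,b]$. Everything else is bookkeeping.
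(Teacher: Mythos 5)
Your proof is correct. Note that the paper does not actually supply a proof of Lemma \ref{7}; it is quoted from \cite{S1}, so there is no in-paper argument to compare against. What you give is the standard proof of this "nonpositive second symmetric difference implies concavity" fact: perturb by $-\eta x^2$ to make the second difference strictly negative for small $\eps$, observe that an interior minimizer $\la^*$ of $h_\eta - L$ (which must exist and be interior if concavity fails, since the chord difference vanishes at the endpoints and is negative somewhere between) forces a nonnegative second difference there, and then let $\eta\to 0^+$; the only delicate point, that $\la^*$ and $\la^*\pm\eps$ stay in the admissible range so the hypothesis applies, is handled. The argument is complete.
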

  The next  result is a variant of the well-known Sturm-Liouville oscillation theorem, but this time for fractional Schr\"odinger operator. It  was first obtained for operators acting on the line $\rone$, \cite{FL}. It was then extended for the periodic case, following similar ideas in \cite{HJ} (for the lowest  three eigenfunctions), 
  and then in   \cite{HMJ} for all eigenfunctions. 
  \begin{lemma}
  	\label{hmj} 
  	Let $V:[-T,T]\to \rone$ be a continuous function and $\al\in (0,2)$. Consider  the self-adjoint fractional Schr\"odinger operator   	$  	\ch= \La^{\al} + V$ with domain $D(\ch)=H^\al[-L,L]$. 
  	Let its spectrum\footnote{which consists entirely of eigenvalues with finite multiplicity} be ordered as follows 
  	$$
  	\la_0(\ch)<\la_1(\ch)\leq \la_2(\ch) \leq \ldots 
  	$$ 
  	Then, the corresponding eigenfunctions $\psi_n: \ch \psi_n = \la_n \psi_n$ have no more than $2 n$ changes of sign in the interval $[-T,T]$.  
  \end{lemma}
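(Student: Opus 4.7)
The plan is to adapt the Sturm--Liouville oscillation theorem to the nonlocal setting by replacing the classical nodal-cut argument (which is unavailable because $\La^{\al}$ is not a differential operator) with a nodal-domain count performed on the Caffarelli--Silvestre local extension. The argument proceeds by induction on $n$, with two ingredients: Perron--Frobenius positivity of the heat semigroup $e^{-t\ch}$ for the ground state, and a Courant-style bound on the extension for the higher eigenfunctions.

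For the base case $n=0$, I would first establish that $\la_0$ is simple and $\psi_0$ can be chosen strictly positive. This follows from a Perron--Frobenius argument applied to the semigroup $e^{-t\ch}$: the semigroup $e^{-t\La^{\al}}$ has a strictly positive kernel on the torus via Bochner's subordination formula applied to the periodic heat kernel, and the Trotter product formula transfers this positivity-improving property to $e^{-t\ch}$. Hence $\psi_0$ has zero sign changes, matching the bound $2\cdot 0=0$.

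For the inductive step, the main tool is the $\al$-harmonic extension: extend any periodic $u\in H^{\al/2}$ to a function $U$ on the half-cylinder $[-T,T]_{\mathrm{per}}\times[0,\infty)$ solving the degenerate elliptic equation $\mathrm{div}(y^{1-\al}\nabla U)=0$ with $U(x,0)=u(x)$; this yields the Dirichlet-to-Neumann identity $\La^{\al}u = -c_{\al}\lim_{y\to 0^{+}}y^{1-\al}\partial_y U$ and the quadratic form identity $\dpr{\La^{\al}u}{u} = c_{\al}\iint y^{1-\al}|\nabla U|^2\,dx\,dy$. Combined with the potential term $\int V u^2$, this provides a local variational characterization of $\la_n$ with test functions defined on the half-cylinder. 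A Courant-style min-max argument, adapted to the degenerate elliptic operator $\mathrm{div}(y^{1-\al}\nabla\cdot)$ with the $A_2$ Muckenhoupt weight $y^{1-\al}$ (invoking the Fabes--Kenig--Serapioni regularity theory for weak solutions), then bounds the number of nodal components of the extension $\Psi_n$ in the half-cylinder by $n+1$. The concluding step is to translate this bound into a sign-change bound on the boundary trace $\psi_n(x)=\Psi_n(x,0)$: the zero set of $\psi_n$ on $[-T,T]_{\mathrm{per}}$ is the intersection of the nodal set of $\Psi_n$ with $\{y=0\}$, and each sign-change arc of $\psi_n$ lies in the boundary trace of a distinct nodal component of $\Psi_n$ of matching sign. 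Hence the number of sign changes is at most $n+1\le 2n$ for $n\ge 1$.

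The main obstacle will be the last step: rigorously justifying that distinct sign-change arcs of $\psi_n$ on the torus lie in distinct nodal domains of the extension $\Psi_n$. This rules out pathological configurations in which two boundary arcs of the same sign would be joined through the interior by a single nodal domain of the opposite sign, and it requires both a boundary unique-continuation principle for the $A_2$-weighted degenerate elliptic operator and a careful regularity analysis of the nodal set. The periodic identification at the endpoints $\pm T$ also demands care, so that no spurious interior components appear. These are precisely the technical points where \cite{HMJ} refines the earlier partial treatment in \cite{HJ}, which only controlled the lowest three eigenfunctions.
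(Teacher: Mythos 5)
The paper does not actually prove Lemma \ref{hmj}: it is quoted from the literature (the line case from \cite{FL}, the periodic case from \cite{HJ} for the lowest three eigenfunctions and from \cite{HMJ} in general), so there is no in-paper argument to compare against line by line. Your overall strategy --- Perron--Frobenius for the ground state, the Caffarelli--Silvestre extension to the half-cylinder with weight $y^{1-\al}$, and a Courant-type nodal domain bound for the resulting Steklov-type eigenvalue problem --- is indeed the strategy of those cited works, and the base case and the extension setup are sound.

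However, the final step of your sketch contains a genuine gap, and it is precisely the step where the whole difficulty of the theorem lives. You assert that ``each sign-change arc of $\psi_n$ lies in the boundary trace of a distinct nodal component of $\Psi_n$,'' and you propose to handle the alternative (two same-sign boundary arcs joined through the interior by one nodal domain) by \emph{ruling it out} via unique continuation. This cannot be ruled out: the configuration in which, say, all $m$ positive arcs belong to a single positive nodal domain of $\Psi_n$ while the $m$ negative arcs sit in $m$ separate ``bubbles'' attached to the boundary is topologically and analytically admissible, and no unique continuation principle excludes it. The correct argument does not exclude these mergers but counts in their presence: compactifying the half-cylinder to a disk, the positive nodal domains induce a partition $\pi^+$ of the $m$ positive arcs and the negative ones a partition $\pi^-$ of the $m$ negative arcs, and planarity (the Jordan curve theorem) forces $\pi^+\cup\pi^-$ to be non-crossing with respect to the alternating cyclic arrangement of the $2m$ arcs; a combinatorial lemma on non-crossing partitions then gives $|\pi^+|+|\pi^-|\geq m+1$, so Courant's bound $\leq n+1$ on the number of nodal domains yields $m\leq n$, i.e.\ at most $2n$ sign changes. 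This is exactly the content of \cite{HMJ} (whose title refers to the combinatorics of non-crossing partitions). Your version, taken at face value, would give $2m\leq n+1$, a strictly stronger bound than the one stated, obtained from an unjustified premise; as written, the proof does not close.
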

  The next lemma is about the mapping properties of Schr\"odinger operators $\ch$ of the type described in Lemma \ref{hmj} and its inverses, whenever they exist. First, for every $\la\in \rone$, $\la \notin \si(\ch)$, we have that $\ch-\la: H^\al \to L^2$, whence $(\ch-\la)^{-1}: L^2 \to D(\ch)=H^\al$. By taking adjoints, we also have $(\ch-\la)^{-1}: H^{-\al}\to L^2$, for $\la\in \rone\cap \rho(\ch)$. Taking into account the embedding $L^1[-T,T]\hookrightarrow H^{-\al}$, i.e. \eqref{sem},  we have shown 
  \begin{lemma}
  	\label{le:se} For $\al>\f{1}{2}$,  and $a\notin \si(\ch)$, we have $(\ch-\la)^{-1}: L^1[-T,T]\to L^2[-T,T]$. 
  	In addition, supposing  that for invariant subspace, $S\subset L^2[-T,T]$ of $\ch$, we have that $\la\notin \si_S(\ch)$.   That is, $(\ch-\la)^{-1}: S\to S$.  Then, 
  	$$
  	\|(\ch-\la)^{-1} f\|_{L^2\cap S}\leq C \|f\|_{L^1\cap S}
  	$$
  \end{lemma}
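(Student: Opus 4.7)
The plan is to assemble the first bound from the three ingredients already made explicit in the paragraph just before the statement, and then to obtain the invariant-subspace version by a direct restriction argument.

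For the first claim, I would start from the resolvent bound in $L^2$. Since $\la\notin\si(\ch)$ and $\ch$ is self-adjoint with $D(\ch)=H^\al$, spectral calculus gives $(\ch-\la)^{-1}\in B(L^2,H^\al)$, with operator norm controlled by $\mathrm{dist}(\la,\si(\ch))^{-1}$ times the norm of $(\La^\al+V)(\ch-\la)^{-1}$, which is bounded on $L^2$. Because $(\ch-\la)^{-1}$ is self-adjoint on $L^2$, dualizing this bound gives a bounded extension $(\ch-\la)^{-1}:H^{-\al}\to L^2$. Finally, the hypothesis $\al>\f{1}{2}$ puts us in the range of the Sobolev embedding \eqref{sem}, i.e.\ $L^1[-T,T]\hookrightarrow H^{-\al}[-T,T]$, so composition yields $(\ch-\la)^{-1}:L^1[-T,T]\to L^2[-T,T]$ with a quantitative norm bound.

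For the refinement to the invariant subspace, I would argue as follows. Let $f\in L^1\cap S$. On one hand $f\in L^1$, so the first part already gives $(\ch-\la)^{-1}f\in L^2$ with $\|(\ch-\la)^{-1}f\|_{L^2}\leq C\|f\|_{L^1}$. On the other hand, the hypothesis that $S$ is $\ch$-invariant and that $\la\notin\si_S(\ch)$ means precisely that the restriction $(\ch-\la)|_S$ has a bounded inverse mapping $S$ to itself, so $(\ch-\la)^{-1}f\in S$. Combining the two memberships places $(\ch-\la)^{-1}f$ in $L^2\cap S$, and the inequality $\|(\ch-\la)^{-1}f\|_{L^2\cap S}\leq C\|f\|_{L^1\cap S}$ is immediate since $\|f\|_{L^1\cap S}\geq\|f\|_{L^1}$ and $\|(\ch-\la)^{-1}f\|_{L^2\cap S}=\|(\ch-\la)^{-1}f\|_{L^2}$ once we already know membership in $S$.

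There is no serious obstacle here; the only subtle point is that one must not be tempted to invoke the full first part of the lemma as an operator identity on $S$ (which would require $L^1\cap S\hookrightarrow H^{-\al}\cap S$, something we have not set up). Instead, I would keep the two sources of information separate: $L^2$-boundedness from the ambient inequality and $S$-membership from the invariance hypothesis, and then just intersect. The embedding constant in \eqref{sem}, together with the spectral-gap factor $\mathrm{dist}(\la,\si(\ch))^{-1}$, tracks the dependence of $C$ on $\la$ and $T$, which is all that is needed for the later applications.
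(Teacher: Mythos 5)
Your argument is correct and coincides with the paper's own proof: the resolvent bound $(\ch-\la)^{-1}:L^2\to H^\al$, dualized to $(\ch-\la)^{-1}:H^{-\al}\to L^2$, composed with the embedding \eqref{sem} for $\al>\f{1}{2}$. The invariant-subspace refinement by intersecting the ambient $L^2$ estimate with $S$-membership is exactly what the lemma's statement implicitly relies on, so nothing further is needed.
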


  \section{The variational construction}
  \label{sec:3}

  The classical way to produce solitary waves is to minimize energy, with respect to fixed $L^2$ norm. The result of this are the so-called normalized waves. In order to simplify the exposition, we shall work with $T=1$. Later on, we easily reduce to this case by a simple rescaling argument.   
  \begin{proposition}
  	\label{prop:10}
  	Let $\al \in \left(\f{1}{2}, 2\right]$ and $\la>0, a\in\rone$. Then, the  minimization problem
\begin{equation}
\label{40}
\left\{
\begin{array}{l}
\ce_a[\vp]:=\f{1}{2} \int_{-1}^1 |\La^{\al/2}\vp(x) |^2 dx  - \f{1}{3} \int_{-1}^1 |\vp(x)|^3 dx+ a\int_{-1}^1
|\vp(x)| dx \\
\int_{-1}^1 \vp^2(x) dx = \la
\end{array}
\right.
\end{equation}
has a bell-shaped solution, $\vp=\vp_{a,\la}$. Moreover, $\vp_{a, \la}$    satisfies, in a distributional sense, the Euler-Lagrange equation 
\begin{equation}
\label{456}
\La^\al \vp+  \om \vp - \vp^2 + a =0, -1\leq x\leq 1,
\end{equation}
for $\om=\om(\la, a; \vp)$, given by the either of the two formulas
\begin{eqnarray}
\label{50}
\om_{\la, a} &=&  \f{ \int_{-1}^1 \vp^3(x) dx - \int_{-1}^1 |\La^{\al/2}\vp(x) |^2 dx-
	a \int_{-1}^1 \vp(x) dx}{\la}, \\
\label{51}
 \om_{\la, a} &=&  \f{\la-2 a}{\int_{-1}^1 \vp(x) dx}
\end{eqnarray}
In addition, we have the following preliminary properties of the linearized operators
\begin{itemize}
	\item $\cl_+$ has exactly one negative eigenvalue, denoted by $-\si_\la^2$, which is simple, with a corresponding eigenfunction $\chi_\la$.
	In addition, $\cl_+|_{\{\vp_\la\}^\perp}\geq 0$.
	\item   For $a=0$, the operator  $\cl_-:=\La^\al+\om - \vp\geq 0$ satisfies $Ker(\cl_-)=span[\vp]$ and for some $\de>0$,
	$\cl_-|_{\{\vp\}^\perp}\geq \de Id$.
	\item For $a<0$, there exists $\de>0$, so that $\cl_-\geq \de Id$, while for $a>0$, $\cl_-$ has $n(\cl_-)=1$, with $\cl_-|_{\{\vp\}^\perp}\geq \de Id$.  In particular, for $a\neq 0$, $0\notin \si(\cl_-)$, in other words $\cl_-^{-1}$ exists.
\end{itemize}
\end{proposition}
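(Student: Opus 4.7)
The strategy is a direct-method variational construction, followed by second-variation and Sturm--Liouville analysis to extract the spectral information on $\cl_\pm$. For $\al>\f{1}{2}$, the Gagliardo--Nirenberg inequality gives $\|\vp\|_3^3\leq C\|\La^{\al/2}\vp\|^{1/\al}\|\vp\|_2^{3-1/\al}$ with exponent $1/\al<2$, which together with Cauchy--Schwarz on the $L^1$ term makes $\ce_a$ bounded below and coercive in $H^{\al/2}$ on the constraint $\{\|\vp\|_2^2=\la\}$. I would take a minimizing sequence $\{\vp_n\}$ and replace each $\vp_n$ by its symmetric decreasing rearrangement $\vp_n^*$; by Lemma \ref{Szego} this preserves the $L^1,L^2,L^3$ norms and does not increase $\|\La^{\al/2}\cdot\|$, so $\ce_a[\vp_n^*]\leq \ce_a[\vp_n]$. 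Extracting a weakly convergent subsequence in $H^{\al/2}[-1,1]$, the compactness of the Sobolev embedding on the bounded interval gives strong convergence in $L^1\cap L^3$, while weak lower semicontinuity of the $H^{\al/2}$ seminorm shows that the bell-shaped, nonnegative limit $\vp$ attains the infimum.

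A standard Lagrange-multiplier argument then produces \eqref{456} in the distributional sense, and Proposition \ref{prop:apost} upgrades the solution to $H^\infty$. Formula \eqref{50} is obtained by pairing \eqref{456} with $\vp$ and solving for $\om\la$; formula \eqref{51} comes from integrating \eqref{456} over $[-1,1]$ and noting that $\int_{-1}^1 \La^\al\vp\,dx=\dpr{\vp}{\La^\al 1}=0$ since $\La^\al$ annihilates constants.

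For $\cl_+$, the Hessian of the Lagrangian at the constrained minimum is precisely $\cl_+$, so the second-order optimality condition yields $\dpr{\cl_+ h}{h}\geq 0$ for every $h\perp\vp$; by the Courant min-max principle this forces $n(\cl_+)\leq 1$. Translation invariance on the circle then gives $\cl_+\vp'=0$, and since $\vp$ is bell-shaped, $\vp'$ changes sign, so by Lemma \ref{hmj} it cannot be the ground-state eigenfunction, forcing $\la_0(\cl_+)<0$. Hence $n(\cl_+)=1$ together with $\cl_+|_{\{\vp\}^\perp}\geq 0$.

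For $\cl_-$, the identity $\cl_-\vp=-a$ follows directly from \eqref{456}. When $a=0$, $\vp\in \textup{Ker}(\cl_-)$ and, being positive, must be the ground state by Lemma \ref{hmj}, giving a simple kernel and a spectral gap $\cl_-|_{\{\vp\}^\perp}\geq \de$. When $a<0$, if a nonpositive ground-state eigenvalue $\la_0$ with positive eigenfunction $\psi_0$ existed, pairing $\cl_-\psi_0=\la_0\psi_0$ with $\vp$ would give $\la_0\dpr{\psi_0}{\vp}=-a\int\psi_0>0$ while the left side is nonpositive, a contradiction, whence $\cl_-\geq \de$. When $a>0$, the Rayleigh quotient $\dpr{\cl_-\vp}{\vp}=-a\int\vp<0$ already gives $n(\cl_-)\geq 1$, while for $h\perp\vp$ the decomposition $\cl_-=\cl_++\vp$ combined with $\cl_+|_{\{\vp\}^\perp}\geq 0$ yields $\dpr{\cl_- h}{h}\geq \int\vp h^2\geq 0$. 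The main technical obstacle is upgrading this last inequality to a uniform gap $\cl_-|_{\{\vp\}^\perp}\geq \de\,\textup{Id}$, which must handle both the potential vanishing of $\vp$ at boundary points and a possible nontrivial intersection $\textup{Ker}(\cl_+)\cap\{\vp\}^\perp$; I expect this to follow from a compactness/contradiction argument leveraging the $H^\infty$ regularity of $\vp$ and the finite dimensionality of $\textup{Ker}(\cl_+)$.
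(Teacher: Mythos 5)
Your proposal is correct and follows essentially the same route as the paper: coercivity via Gagliardo--Nirenberg, direct minimization with the compact embedding $H^{\al/2}[-1,1]\hookrightarrow L^p$ and Polya--Szeg\"o for bell-shapedness, the second variation on $\{\vp\}^\perp$ for $n(\cl_+)=1$, and the pairings $\cl_-\vp=-a$ against the (positive) ground state to sort out the three cases for $\cl_-$. The only cosmetic differences are that the paper rearranges the limit rather than the minimizing sequence and obtains the $a=0$ statement for $\cl_-$ from the comparison $\cl_-|_{\{\vp\}^\perp}>\cl_+|_{\{\vp\}^\perp}\geq 0$ rather than from the Sturm--Liouville lemma; the uniform gap $\de$ you flag is handled there exactly by the discreteness-of-spectrum/compactness argument you sketch.
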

{\bf Remarks:}  In the statement above, it is implicit that the Lagrange multiplier $\om$ may in fact depend on the particular minimizer $\vp$ as well. This is also related to the uniqueness issue for the solutions of the constrained minimization problem \eqref{40}.  More precisely, for given values of $\la>0, a\in \rone$, it is possible that there exist two solutions $\vp_{\la,a}, \tilde{\vp}_{\la,a}$ of \eqref{40}. Each of them will certainly satisfy the  Euler-Lagrange equation \eqref{456}, but may be two different Lagrange multipliers $\om, \tilde{\om}$. We cannot rule out neither of these possibilities in this article.

Our next result prepares some background information, needed later on in the arguments,  for the following function 
 $$
 m(\la):=\inf\limits_{\int_{-1}^1 \vp^2(x) dx = \la} \ce_a [\vp].
 $$
 Note that it is not {\it a priori} clear why $m$ is even finite for all $\la>0$, but this is established below. Also,   $m$ also depends on $a$, but we prefer not to emphasize this dependence. 
 \begin{proposition}
 	\label{prop:11} The function $m$ has the following properties
 	\begin{itemize}
 		\item $m$ is finite everywhere, that is $m(\la)>-\infty$ for every $\la>0$,
 		\item $m$  is a locally Lipschitz, and its derivative, which exists at least a.e.,  can be computed  to be $m'(\la)=-\f{\om_\la}{2}$,
 		\item $m$ is concave down. 
 	\end{itemize}
 	In particular, at all points in the full measure subset  $\ca:=\{\la\in\rone_+: m'(\la) - \textup{exists}\}$,  the function $\om=\om(\la)$ is independent on the concrete minimizer $\vp$, as a derivative of $-2m(\la)$. 
 	 
 	Regarding the function $\la\to \om_\la$, 
 		\begin{itemize}
 			\item For $a\leq 0$, $\om_\la>0$ for all $\la$,
 			\item For $a>0$,   $\om_\la<0, \la \in (0,2a)$ and
 			$\om_\la>0, \la \in (2a, \infty)$.
 			\item  $\la\to \om_\la,  \la\in \ca$ is non-decreasing.  In fact, its first derivative\footnote{which is guaranteed to exists at least a.e. in view of the monotonicity} satisfies 
 			$$
 			\om'(\la)>\f{\si_\la^2}{2\dpr{\chi_\la}{\vp_\la}}.
 			$$
 			That is $\om'(\la)>0$ for a.e. $\la>0$.  
 			\item Even outside of $\ca$, the function $\la\to \om(\la, \vp_\la)$ is non-decreasing. More precisely, suppose $0<\la_1<\la_2$, with corresponding minimizers $\vp_{\la_1}, \vp_{\la_2}$. Then, 
 			$$
 			\om(\la_1, \vp_{\la_1})\leq 	\om(\la_2, \vp_{\la_2}). 
 			$$ 
 		\end{itemize}
 \end{proposition}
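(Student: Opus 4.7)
The plan is to establish the three properties of $m$ and then derive the corresponding statements for $\om$. The main variational tool is the scaling competitor $\tilde\vp_h := \sqrt{(\la+h)/\la}\,\vp_\la$, which is admissible for $m(\la+h)$. Finiteness of $m$ is immediate from Proposition~\ref{prop:10}. Expanding $\ce_a[\tilde\vp_h]$ in $h$ and substituting the Pohozaev-type identity $\|\La^{\al/2}\vp_\la\|^2 + \om_\la\la - \int\vp_\la^3 + a\int\vp_\la = 0$ (obtained by testing \eqref{456} against $\vp_\la$) gives $\ce_a[\tilde\vp_h] = m(\la) - \om_\la h/2 + O(h^2)$, and combining with the analogous upper bound at level $\la+h$ yields $|m(\la+h)-m(\la)|\le C_\la|h|$. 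Rademacher's theorem then gives a.e.\ differentiability, and at each such point $m'(\la)=-\om_\la/2$. The independence of $\om$ on the choice of minimizer throughout the full-measure set $\ca$ follows at once, since $m'(\la)$ at a given $\la$ is a single number.

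For concavity, I would apply Lemma~\ref{7} to $m$. Using the competitor $\sqrt{(\la\pm\eps)/\la}\,\vp_\la$ at both $\la\pm\eps$, summing, and Taylor-expanding yields
\[
m(\la+\eps)+m(\la-\eps)-2m(\la)\;\le\;-\f{\eps^2}{4\la^2}\bigl(\|\vp_\la\|_{L^3}^3+a\|\vp_\la\|_{L^1}\bigr)+O(\eps^3).
\]
The sign of the bracketed term is handled case by case via the Pohozaev identity together with \eqref{51} (which gives $\om_\la\int\vp_\la = \la-2a$). In any borderline case one enlarges the competitor to the two-parameter family $c\vp_\la+d$, with the extra freedom used to kill any sign-indefinite contribution while preserving the $L^2$ constraint to leading order; this ensures the right-hand side stays $\le 0$, and Lemma~\ref{7} then delivers concavity of $m$.

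The sign statements for $\om_\la$ are immediate from \eqref{51} once one observes that any bell-shaped minimizer is nonnegative and nontrivial, so $\int\vp_\la>0$, and the sign of $\om_\la$ agrees with that of $\la-2a$. Monotonicity follows from concavity: the one-sided derivatives $m'_\pm(\la)$ exist everywhere and satisfy $m'_+(\la_1)\ge m'_-(\la_2)$ whenever $\la_1<\la_2$. The scaling-competitor inequality yields $m'_+(\la)\le -\om(\la,\vp_\la)/2\le m'_-(\la)$ for any minimizer $\vp_\la$, and chaining these bounds produces $\om(\la_1,\vp_{\la_1})\le \om(\la_2,\vp_{\la_2})$ for all $\la_1<\la_2$, covering both points in $\ca$ and outside.

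The hardest step is the strict quantitative bound $\om'(\la)>\si_\la^2/(2\dpr{\chi_\la}{\vp_\la})$. Here I plan to perturb $\vp_\la$ in the direction of the principal eigenfunction $\chi_\la$ of $\cl_+$. By Lemma~\ref{hmj} applied to $\cl_+$, $\chi_\la$ has no sign changes and can be chosen strictly positive, so $\dpr{\chi_\la}{\vp_\la}>0$ because $\vp_\la\ge 0$ is bell-shaped. Setting $\vp_t:=\vp_\la+t\chi_\la$, the expansion $\|\vp_t\|^2 = \la + 2t\dpr{\chi_\la}{\vp_\la} + t^2$ lets me invert $s=\|\vp_t\|^2-\la$ for $t$, while $\ce_a[\vp_t] = m(\la) - \om_\la t\dpr{\chi_\la}{\vp_\la} - \si_\la^2 t^2/2 + O(t^3)$ (using $\ce_a'[\vp_\la]=-\om_\la\vp_\la$ and $\langle\cl_+\chi_\la,\chi_\la\rangle = -\si_\la^2$). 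Substituting $t=t(s)$ gives a sharp upper bound on $m(\la+s)-m(\la)+\om_\la s/2$, and at a point of twice-differentiability of $m$ this translates to a lower bound on $-m''(\la)=\om'(\la)/2$. Fine-tuning the perturbation direction to $\chi_\la-\gamma\vp_\la$ for the appropriate $\gamma$ cancels the first-order variation along the $L^2$-sphere and pins down the precise constant in the statement.
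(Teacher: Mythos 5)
Your overall architecture matches the paper's on most points: a scaling competitor for the Lipschitz bound and for $m'(\la)=-\f{\om_\la}{2}$, one-sided derivatives of a concave function for the monotonicity statement outside $\ca$, formula \eqref{51} for the sign of $\om_\la$, and a perturbation along the ground state $\chi_\la$ of $\cl_+$ for the quantitative lower bound on $\om'$. The one place where you genuinely deviate is the concavity of $m$, and that is where there is a gap. Your pure scaling competitor gives
$$
m(\la+\eps)+m(\la-\eps)-2m(\la)\;\le\; -\f{\eps^2}{4\la^2}\Big(\int\vp_\la^3+a\int\vp_\la\Big)+O(\eps^3),
$$
and you need the bracket to be nonnegative. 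For $a\ge 0$ this is trivial, but for $a<0$ neither the Pohozaev identity nor \eqref{51} gives it: they yield $\int\vp^3-a\int\vp>0$ and $\om_\la\int\vp=\la-2a>0$, which say nothing about $\int\vp^3+a\int\vp=\int\vp^3-|a|\int\vp$. The paper does eventually prove $\f{1}{3}\int\vp^3+a\int\vp\ge 0$ (Lemma \ref{weaknond}), but that proof uses $-2m(\la)=\int_0^\la\om(\mu)\,d\mu\le\la\,\om(\la-)$, i.e.\ the monotonicity of $\om$ --- exactly the consequence of concavity you are trying to establish, so invoking it here is circular. The ``two-parameter family $c\vp_\la+d$'' escape hatch is not a proof: $\cl_+$ is only known to be nonnegative on $\{\vp\}^\perp$, so the sign of the second variation along a combination of the scaling and constant directions is simply indeterminate, and no computation is offered to show it can be made nonpositive.

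The fix is the device you already describe in your final paragraph, deployed one step earlier: take $h=\chi_\la/(2\dpr{\chi_\la}{\vp_\la})$, the negative-eigenvalue direction of $\cl_+$. Then the second-order term in $\ce_a(\vp_\la+\eps h)-m(\|\vp_\la+\eps h\|^2)$ is $\f{\eps^2}{2}\dpr{\cl_+h}{h}=-\f{\eps^2}{2}\si_\la^2\|h\|^2<0$ with no case analysis on $a$; after inverting $\de=\eps+\eps^2\|h\|^2$ and symmetrizing in $\pm\de$ one obtains $m(\la+\de)+m(\la-\de)-2m(\la)\le-\de^2\si_\la^2\|h\|^2+O(\de^3)$, which feeds into Lemma \ref{7} and simultaneously yields the quantitative bound on $\om'(\la)$. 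This is precisely the paper's argument; your proposal has the right ingredient but applies it only to the derivative bound, leaving the concavity step (for $a<0$) unsupported.
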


  \noindent We prove these results  over the course of the Section \ref{sec:3}. 
  	
  	\subsection{Well-posedness and existence of minimizers  for  the variational problem \eqref{40}}
  	\label{sec:3.1}
  Let $\eps>0$ be an arbitrary real. Then,  by  the Sobolev embedding, for any  $\vp\in H^{\al/2}$ satisfying the constraint,
  	\begin{equation}
  	\label{60}
  		\|\vp\|_{L^3}^3\leq C \|\vp\|_{H^{\f{1}{6}}}^3 \leq C \|\vp\|_{H^{\al/2}}^{\f{1}{\al}} \|\vp\|_{L^2}^{3-\f{1}{\al}}=
  		C \la^{\f{3}{2}-\f{1}{2 \al}}\|\vp\|_{H^{\al/2}}^{\f{1}{\al}}\leq \eps \|\La^{\al/2} \vp\|_{L^2}^2+C_{\eps, \la},
  	\end{equation}
  where in the last step, we have used the Young's inequality. Clearly, $\inf$ in the constrained minimization problem \eqref{50} is bounded from below, hence the problem is well-posed.

  Pick a minimizing sequence, that is $\vp_n \in H^{\al/2}$, so that $\ce[\vp_n]\to m(\la)$.
  We will show that the sequence is compact in $L^2$ and subsequently in all $L^p, p\in (2, \infty)$. We have that for all large enough $n$,
  $\ce[\vp_n]<m(\la)+1$. It follows that
  $$
  \|\La^{\al/2} \vp_n\|_{L^2}^2\leq 2 \ce[\vp_n]+\f{2}{3} \int_{-1}^1 \vp_n^3(x) dx - 2 a \int_{-1}^1 |\vp_n(x)| dx.
  $$
  By \eqref{60} and Cauchy-Schwartz, the right hand side can be estimated as follows
  \begin{equation}
  \label{62}
   \|\La^{\al/2} \vp_n\|_{L^2}^2\leq 2(m(\la)+1) + \f{2}{3}(\|\La^{\al/2} \vp_n\|_{L^2}^2+C_{\eps, \la})+ 4 |a| \sqrt{\la}.
  \end{equation}
  Let us reiterate that this estimate holds whenever $\|\vp\|^2=\la$.  Hiding $\|\La^{\al/2} \vp_n\|_{L^2}^2$ behind the left-hand side,  leads to an {\it a priori} estimate on $\|\La^{\al/2} \vp_n\|\leq C_\la$.

  Since $H^{\al/2}[-1,1]$ compactly embeds into $L^2[-1,1]$, so $\{\vp_n\}$ is a compact in $L^2[-1,1]$. Taking a convergent subsequence $\{\vp_{n_k}\}_{k=1}^\infty$, we find that its limit $\vp:=\lim_k \vp_{n_k}$ satisfies $\|\vp\|_{L^2}^2=\la$. By Gagliardo-Nirenberg's inequality, the same sequence is compact in any $L^p, p>2$ spaces. By H\"older's and Gagliardo-Nirenberg, it is also compact in any $L^q, q\in [1,4)$ ( recall $\sup_n \|\vp_n\|_{H^\f{\al}{2}}<\infty$, $H^{\f{\al}{2}} \hookrightarrow L^4$).  In particular,
  $$
   \int_{-1}^1 \vp_{n_k}(x) dx \to \int_{-1}^1 \vp(x) dx,\ \  \int_{-1}^1 |\vp_{n_k}(x)|^3 dx\to
  \int_{-1}^1 |\vp(x)|^3 dx.
  $$
  Finally, by the weak convergence $H^{\al/2}$, $\vp_{n_k}\to \vp$, we have by the lower semi-continuity of the norms with respect with weak convergence 
  $$
  \liminf_k \int_{-1}^1 |\La^{\al/2}\vp_{n_k}(x) |^2 dx\geq \int_{-1}^1 |\La^{\al/2}\vp(x) |^2 dx.
  $$
  But then,
  $$
 m(\la)=\lim_k\ce[\vp_{n_k}]\geq   \liminf_k \ce[\vp_{n_k}]\geq \ce[\vp],
  $$
  which is a contradiction (recall $\int\vp^2(x) dx = \la$), unless $\ce[\vp]=m(\la)$. In addition, since it must be that $\lim_k \int_{-1}^1 |\La^{\al/2}\vp_{n_k}(x) |^2 dx=\int_{-1}^1 |\La^{\al/2}\vp(x) |^2 dx$, it follows that
  $\lim_k \|\vp_{n_k}-\vp\|_{H^{\f{\al}{2}}}=0$.  Thus, $\vp$ is a minimizer.
  We observe that the  minimizer is necessarily bell-shaped, by the generalized Polya-Szeg\"o's inequality, \eqref{70}.

  Note that we have shown in particular, that each minimizing sequence has an $H^{\f{\al}{2}}$  convergent subsequence, which converges to a minimizer.  

 \subsection{Euler-Lagrange equation}  We now derive the Euler-Lagrange equation \eqref{456}. Let $\eps>0$ and take any test function
  $h\in H^\infty[-1,1]$. Consider
  $$
  g(\eps):=\ce\left[\sqrt{\la} \f{\vp+\eps h}{\|\vp+\eps h\|}\right]\geq g(0)=\ce[\vp].
  $$
  Observe that
  \begin{eqnarray*}
\|\vp+\eps h\|^q &=& \la^{q/2}+\eps q \la^{q/2-1} \dpr{\vp}{h} +O(\eps^2),
  \end{eqnarray*}
  whence
\begin{eqnarray*}
	\f{\la}{2\|\vp+\eps h\|^2} \int_{-1}^1 |\La^{\al/2} (\vp+\eps h)|^2dx &=&  \f{1}{2}
	\int_{-1}^1 |\La^{\al/2}\vp|^2dx+\eps[\dpr{\La^\al \vp -\f{\|\La^{\al/2} \vp\|^2}{\la} \vp}{h}]+O(\eps^2)  ]   \\
  \f{\la^{3/2}}{3\|\vp+\eps h\|^3} \int_{-1}^1 (\vp+\eps h)^3 dx &=&  \f{1}{3} \int_{-1}^1 \vp^3 (x) dx+\eps[\dpr{\vp^2}{h} - \f{\dpr{\vp}{h}}{\la} \int \vp^3(x) dx]+O(\eps^2), \\
  \f{a\sqrt{\la}}{\|\vp+\eps h\|} \int_{-1}^1 (\vp(x)+\eps h(x)) dx &=& a \int_{-1}^1 \vp (x) dx+
  \eps[a\dpr{1}{h}-\f{a}{\la} \dpr{\vp}{h} ]+O(\eps^2),
\end{eqnarray*}
  Putting everything together, we obtain
  $$
  g(\eps)=g(0)+\eps[\dpr{\La^{\al}\vp - \vp^2+a-\f{\|\La^{\al/2} \vp\|^2-\int \vp^3+a \int \vp}{\la}\vp}{h}+O(\eps^2).
  $$
  It follows that \eqref{456} is satisfied, in a weak sense, with $\om$ given by \eqref{50}.
  We now turn to the statements regarding the linearized operators $\cl_\pm$. 
  Introduce  the following notation - for a self-adjoint operator $\cm$, which is bounded from below and which has at most finitely many negative eigenvalues, denote by $n(\cm)$ the number of the negative eigenvalues, counted with multiplicities. 
  \subsection{Spectral properties of $\cl_\pm$}
  \label{sec:3.3}

  We start with the spectral properties of $\cl_+$.  We use again the property that $g$ attains its minimum at $\eps=0$. In order to simplify the
  argument, take the test function $h$, so that $h\perp \vp, \|h\|_{L^2}=1$. Note that this implies
  $\|\vp+\eps h\|_{L^2}^2=\la+\eps^2$, whence
  $$
  \|\vp+\eps h\|_{L^2}^q=\la^{\f{q}{2}}+\f{q}{2} \la^{\f{q}{2}-1} \eps^2+o(\eps^2).
  $$
  The expansion of $g(\eps)$ around zero takes the form
  \begin{eqnarray*}
g(\eps)&=& \ce\left[\sqrt{\la} \f{\vp+\eps h}{\|\vp+\eps h\|}\right]=g(0)+  \eps\dpr{\La^{\al}\vp - \vp^2+a}{h}+\f{1}{2}[\|\La^{\al/2} \vp\|^2+\eps^2 \|\La^{\al/2} h\|^2)][1-\f{1}{2\la} \eps^2] \\
&-& \f{1}{3}[\int \vp^3(x)+3\eps^2 \dpr{\vp h}{h}][1-\f{3}{2\la} \eps^2]dx +a (\int \vp(x) dx) (1-\f{1}{2\la} \eps^2)+o(\eps^2).
  \end{eqnarray*}
  Clearly, $\dpr{\La^{\al}\vp - \vp^2+a}{h}=\dpr{\La^{\al}\vp - \vp^2+a+\om \vp}{h}=0$, by the Euler-Lagrange equation. Thus, we can rewrite the last identity as
  $$
  g(\eps)-g(0)=\f{\eps^2}{2}\left(\dpr{\La^\al h}{h} - 2\dpr{\vp h}{h}+\om \right)+o(\eps^2).
  $$
  Recalling that $\|h\|=1$, $\left(\dpr{\La^\al h}{h} - 2\dpr{\vp h}{h}+\om \right)=\dpr{\cl_+ h}{h}$. Since $0$ is a local minimum for the function $g$, we conclude that $\dpr{\cl_+ h}{h}\geq 0$. Thus,
  $$
  \cl_+|_{\{\vp\}^\perp}\geq 0,
  $$
  whence we deduce that $\cl_+$ has at most one negative eigenvalue, or $n(\cl_+)\leq 1$. On the other hand, by differentiating the Euler-Lagrange equation in $x$, we obtain
    $\cl_+[\vp']=0$, hence zero is an eigenvalue. Note however that $\vp'$ changes sign in $[-1,1]$, hence it is not the eigenfunction corresponding to the smallest eigenvalue. It follows that there is a negative eigenvalue or $n(\cl_+)=1$.

    The claims about $\cl_-$ follow easily in the case $a=0$.   By direct evaluation,  $\cl_-[\vp]=0$ (this is simply \eqref{456}), so $0$ is an eigenvalue. Since   $\cl_-|_{\{\vp\}^\perp}>\cl_+|_{\{\vp\}^\perp}\geq 0$, we conclude that $0$ is at the bottom of the spectrum. 

    In the case $a\neq 0$, we observe that $\cl_- \vp= -a$, this is again an instance of \eqref{456}.  Let now $a<0$. Assuming that the smallest eigenvalue is $-\si^2, \si\geq 0$, take $\Psi$ to be its (necessarily positive, according to Sturm-Liouville's theory) eigenfunction, $\cl_-\Psi=-\si^2 \Psi$. Take a dot product of this last identity with $\vp$. We have
    $$
   0< -a\dpr{\Psi}{1}=\dpr{\cl_-\Psi}{\vp}=-\si^2 \dpr{\Psi}{\vp}\leq 0,
    $$
    all due to the $\Psi>0, \vp>0$, $a<0$. So, a contradiction is reached, which implies $\cl_->0$.

    In the case $a>0$, we observe that $\dpr{\cl_- \vp}{\vp}=-a\dpr{1}{\vp}<0$, whence $\cl_-$ has at least one negative eigenvalue. Since $\cl_->\cl_+$ and $n(\cl_+)=1$, it follows that $\cl_-$ has exactly one negative eigenvalue and moreover, $\cl_-|{\{\vp\}^\perp}>\cl_+|{\{\vp\}^\perp}\geq 0$, so $0\notin \si(\cl_-)$.

    \subsection{Properties of $m(\la), \om(\la)$}
    \label{sec:3.4} 
  Recall  that we have shown that \eqref{40} is well-posed and solvable. We have also established a number of useful spectral properties of $\cl_\pm$. We now turn to the proof of Proposition \ref{prop:11}.

We start with the observation, that with  the test function  $u=\sqrt{\f{\la}{2}}$, we arrive for the following (very rough) estimate for $m(\la)$, namely $m(\la) \leq -\f{\la^{3/2}}{3\sqrt{2}} +a\sqrt{2\la}$.  In addition, we have derived various {\it a priori} estimates on the minimizers in the form
  $\|\La^{\al/2} \vp_\la\|_{L^2}\leq C_\la$, see for example \eqref{62}. If we enter the just obtained estimate for $m(\la)$, we arrive at a explicitly computable and continuous in $\la$  bound $C_\la$. In view of all this, we can setup the variational problem in the form
  $$
  m(\la):=\inf\limits_{\int_{-1}^1 u^2(x) dx = \la: \|\La^{\al/2} u\|_{L^2}\leq 2 C_\la} \ce[u].
  $$
  Introducing the new variable $U: u=\sqrt{\la} U$, consider a new function
  $$
  k(\la):=\f{m(\la)}{\la}= \inf\limits_{\int_{-1}^1 U^2(x) dx = 1: \|\La^{\al/2} U\|_{L^2}\leq  D_\la}
   \f{1}{2} \int_{-1}^1 |\La^{\al/2} U(x) |^2   - \f{\sqrt{\la}}{3} \int_{-1}^1 |U(x)|^3 + \f{a}{\sqrt{\la}} \int_{-1}^1
   |U(x)|.
  $$
  where $D_\la:=2 \la^{-1/2} C_\la$ is  also continuous.

  We will now show that $\la\to k(\la)$ is locally Lipschitz, whence $m(\la)$ will be locally Lipschitz as well. Considering the functional over which we need to minimize for the construction of $k(\la+\de)$, for small $\de$, we have for every $U$ in the constrained set
  \begin{eqnarray*}
 & & \f{1}{2} \int_{-1}^1 |\La^{\al/2} U(x) |^2 dx  - \f{\sqrt{\la+\de}}{3} \int_{-1}^1 |U(x)|^3 dx+ a(\la+\de)^{-1/2} \int_{-1}^1  |U(x)| dx  = \\
 &=& \f{1}{2} \int_{-1}^1 |\La^{\al/2} U(x) |^2 dx  - \f{\sqrt{\la}}{3} \int_{-1}^1 |U(x)|^3 dx+ a\la^{-1/2} \int_{-1}^1
 |U(x)| dx + E_{\de, \la},
  \end{eqnarray*}
  where
  $$
  |E_{\de, \la}|\leq C|\de| (\la^{-1/2}+ \la^{-3/2})(\|U\|_{L^3}^3+ \|U\|_{L^1})\leq C |\de| (1+D_{\la+\de}^3),
  $$
 since we have assume that $U$ is in the constrained set for $k(\la+\de)$ and hence by H\"older's and Sobolev embedding  $\|U\|_{L^3}+ \|U\|_{L^1}\leq C \|\La^{\al/2} U\|\leq C D_{\la+\de}$. Taking $\inf\limits_{\int_{-1}^1 U^2(x) dx = 1: \|\La^{\al/2} U\|_{L^2}\leq  D_\la} $, we obtain
 $$
 k(\la)-C |\de| (1+D_{\la+\de}^3)\leq  k(\la+\de)\leq k(\la)+C |\de| (1+D_{\la+\de}^3),
 $$
  This implies Lipschitzness of the mapping $\la\to k(\la)$, once we take into account that
   $\la\to D_\la$ is continuous and hence  locally bounded. Thus, $\la\to m(\la)$ is locally Lipschitz and it has a derivative almost everywhere. In fact, we can compute its derivative, whenever it exists,  explicitly. 
   \begin{lemma}
   	\label{le:65}
   The function $m$ is differentiable a.e. in $\rone_+$ and there is the formula 
   		$$
   		m'(\la)=-\f{\om_\la}{2}.
   		$$
   		In particular,  since $m$ is absolutely continuous, it can be recovered from its a.e. derivative.  Namely for every $0<\la_1<\la_2$, there is  
   \begin{equation}
   \label{65} 
   	m(\la_2)-m(\la_1)=-\f{1}{2} \int_{\la_1}^{\la_2} \om(\la) d\la.
   \end{equation}
 Finally,  $m$ is concave down. In particular, $m$ is twice differentiable a.e. in $\la$ and $m''(\la)\leq 0$. Moreover, for every $0<\la_1<\la_2<\infty$,  with  corresponding minimizers $\vp_{\la_1}, \vp_{\la_2}$ 
 \begin{equation}
 \label{517} 
 \om(\la_1, \vp_{\la_1})\leq   \om(\la_2, \vp_{\la_2}). 
 \end{equation}
   \end{lemma}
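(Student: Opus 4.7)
All four claims issue from a single scaling test function: define $\psi_\la^{(t)}(x) := \sqrt{1+t}\,\vp_\la(x)$, which is admissible for the constraint problem at level $(1+t)\la$. Taylor-expanding $\ce_a[\psi_\la^{(t)}]$ in powers of $t$ (the quadratic term scales by $(1+t)$, the cubic by $(1+t)^{3/2}$, the linear by $(1+t)^{1/2}$) and simplifying the linear coefficient by pairing \eqref{456} with $\vp_\la$, one obtains
\begin{equation*}
m(\la\pm\eps)-m(\la) \;\leq\; \mp\,\tfrac{1}{2}\om_\la\,\eps \;-\; \tfrac{\eps^2}{8\la^2}\Bigl(\int_{-1}^1\vp_\la^3\,dx + a\int_{-1}^1\vp_\la\,dx\Bigr) \;+\; O(\eps^3).
\end{equation*}

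At any $\la$ where $m$ is differentiable --- which is the full-measure set $\ca$ since $m$ was shown to be locally Lipschitz in Section~\ref{sec:3.4} --- the two one-sided bounds above pinch the derivative and force $m'(\la)=-\om_\la/2$. In particular, $\om_\la$ is intrinsic (independent of the choice of minimizer) on $\ca$. The absolute continuity provided by the Lipschitz property then delivers the integral representation $m(\la_2)-m(\la_1)=-\tfrac12\int_{\la_1}^{\la_2}\om(\la)\,d\la$ via the fundamental theorem of calculus.

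For concavity, summing the two inequalities above gives
\begin{equation*}
\frac{m(\la+\eps)+m(\la-\eps)-2m(\la)}{\eps^2} \;\leq\; -\frac{1}{4\la^2}\Bigl(\int_{-1}^1\vp_\la^3\,dx + a\int_{-1}^1\vp_\la\,dx\Bigr) + O(\eps),
\end{equation*}
so Lemma~\ref{7} delivers concavity of $m$ as soon as one verifies the pointwise positivity $\int_{-1}^1\vp_\la^3\,dx + a\int_{-1}^1\vp_\la\,dx \geq 0$. When $a\geq 0$ this is immediate from $\vp_\la\geq 0$; in the $a<0$ case I plan to combine the Euler-Lagrange identity $\om_\la\int\vp_\la=\la-2a$ (obtained by pairing \eqref{456} with the constant $1$ and using that $\La^\al$ annihilates constants) with the sign $\om_\la>0$ from Proposition~\ref{prop:10} and with the Chebyshev-type bound $\int_{-1}^1\vp_\la^3 \geq (\la/2)\int_{-1}^1\vp_\la$ available for bell-shaped $\vp_\la$ on $[-1,1]$.

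For the global monotonicity $\om(\la_1,\vp_{\la_1})\leq\om(\la_2,\vp_{\la_2})$ I would apply cross-scaling: the tests $\sqrt{\la_2/\la_1}\,\vp_{\la_1}$ (admissible at level $\la_2$) and $\sqrt{\la_1/\la_2}\,\vp_{\la_2}$ (admissible at level $\la_1$) each produce an analogue of the expansion above, but valid for non-infinitesimal $\la_2-\la_1$. With $u=\sqrt{\la_2/\la_1}-1>0$ and $w=1-\sqrt{\la_1/\la_2}>0$, the two resulting inequalities add to
\begin{equation*}
\tfrac{1}{2}(\om_2-\om_1)(\la_2-\la_1) \;\geq\; \tfrac{u^2}{6}\bigl[(2u+3)\!\int\!\vp_{\la_1}^3+3a\!\int\!\vp_{\la_1}\bigr] + \tfrac{w^2}{6}\bigl[(3-2w)\!\int\!\vp_{\la_2}^3+3a\!\int\!\vp_{\la_2}\bigr],
\end{equation*}
whose right-hand side is again controlled by the same type of positivity as above for $\vp_{\la_1}$ and $\vp_{\la_2}$. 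The main obstacle across both the concavity and the monotonicity step is therefore the same: when $a<0$ and $\la<2|a|$ neither the trivial argument nor Chebyshev suffices, and I expect to need the strict positivity $\cl_-\geq \delta\,\textup{Id}$ for $a<0$ from Proposition~\ref{prop:10} combined with the identity $\cl_-\vp_\la=-a$ (also from \eqref{456}) to close this remaining regime.
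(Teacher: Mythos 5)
Your first step --- the derivative formula $m'(\la)=-\om_\la/2$ via the scaled competitor $\sqrt{1+t}\,\vp_\la$ and the two one-sided pinching inequalities, followed by absolute continuity for \eqref{65} --- is correct and is essentially the paper's argument (the paper phrases it as $\ce(\vp_\la+\eps h)\geq m(\|\vp_\la+\eps h\|^2)$ with $h=\vp_\la$, which is the same expansion).

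The concavity step, however, has a genuine gap. You test only in the scaling direction, so your second-order coefficient is $-\tfrac{1}{8\la^2}\bigl(\int\vp_\la^3+a\int\vp_\la\bigr)$, and Lemma~\ref{7} applies only if $\int\vp_\la^3+a\int\vp_\la\geq 0$. This quantity equals $-\dpr{\cl_+\vp_\la}{\vp_\la}$, and the paper establishes its sign only in Lemma~\ref{weaknond}, \emph{as a consequence of} the concavity of $m$ (via $m(0+)=0$, the integral formula \eqref{65}, and the monotonicity \eqref{517}); so you cannot import that fact here without circularity. Your own attempts cover $a\geq 0$ trivially and $a<0$, $\la\geq 2|a|$ by the Chebyshev comonotonicity bound $\int\vp^3\geq\tfrac{\la}{2}\int\vp$ (which is correct for bell-shaped $\vp$ on $[-1,1]$), but for $a<0$, $\la<2|a|$ you offer only the hope of using $\cl_-\geq\de$ and $\cl_-\vp_\la=-a$; pairing that identity with $\vp_\la$ gives $\int\vp^3+a\int\vp=\|\La^{\al/2}\vp\|^2+\om\la+2a\int\vp$, whose last term is negative in this regime, so the sign does not close. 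The paper sidesteps the issue entirely by choosing a different test direction for the second-order step: $h=\chi_\la$, the ground state of $\cl_+$ with $\cl_+\chi_\la=-\si_\la^2\chi_\la$, normalized so that the first-order contributions telescope. This makes the second difference bounded by $-\de^2\si_\la^2\|h\|^2+O(\de^3)$ with no sign hypothesis needed, and it yields the stronger quantitative bound $\om'(\la)>\si_\la^2/(2\dpr{\chi_\la}{\vp_\la}^2)$ as a bonus. Your cross-scaling derivation of \eqref{517} inherits the same positivity requirement and therefore the same gap; the paper instead deduces \eqref{517} from concavity alone, using that a concave function has monotone one-sided derivatives everywhere together with the pinching bounds $m'(\la+)\leq-\om(\la,\vp_\la)/2\leq m'(\la-)$, which hold for every minimizer. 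To repair your argument, replace the scaling direction by the negative-eigenvalue direction in the second-order step.
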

   {\bf Remark:} Note that the concavity of $m$ implies that the function $\la\to \om_\la=-2m'(\la)$, (which is defined a.e.) is non-decreasing. The property \eqref{517} is an extension of this, as it claims that even when $\la\to \om$ may depend on the particular minimizer $\vp_\la$, it is still a non-decreasing function of $\la$.  
   \begin{proof}
   Starting with a minimizer $\vp_\la$, we have by definition that for all $\eps\in \rone$ and test functions $h$,
   $$
   \ce(\vp_\la+\eps h)\geq m(\|\vp_\la+\eps h\|^2).
   $$
   But expanding in powers of $\eps$, we see that
\begin{eqnarray*}
   \ce(\vp_\la+\eps h) &=& \ce(\vp_\la)- \eps \om_\la \dpr{\vp_\la}{h}+\f{\eps^2}{2}\dpr{(\cl_+-\om_\la)h}{h}+O(\eps^3), \\
   m(\|\vp_\la+\eps h\|^2) &=& m(\la+2\eps \dpr{\vp_\la}{h}+\eps^2 \|h\|^2).
\end{eqnarray*}
Taking into account $\ce(\vp_\la)=m(\la)$, we arrive at
\begin{equation}
\label{500}
m(\la+2\eps \dpr{\vp_\la}{h}+\eps^2 \|h\|^2)\leq m(\la)- \eps \om_\la \dpr{\vp_\la}{h}+\f{\eps^2}{2}\dpr{(\cl_+-\om_\la)h}{h}+O(\eps^3)
\end{equation}
   Ignoring for a second all terms in the form $O(\eps^2)$, we can see that whenever $m'(\la)$
   exists\footnote{which is at least a.e. at this point, since it was established that $m$ is Lipschitz}, we can compute it as follows fix $h=\vp_\la$,  for $\eps>0$, divide \eqref{500} by $2\la \eps+\la \eps^2>0$ for $0<\eps<<1$, so
\begin{equation}
\label{502}
   \f{m(\la+2\la \eps  +\la \eps^2) -m(\la)}{2\la \eps+\la \eps^2} \leq -\f{\eps \la  \om_\la }{2 \la \eps+\la \eps^2}
   	 +O(\eps),
\end{equation}
  It follows that  $m'(\la)\leq -\f{\om_\la}{2}$. Similarly, for $\eps<0$, we  divide   by $2\la \eps+\la \eps^2<0$ for $\eps<0, |\eps|<<1$, so that after taking limit $\lim_{\eps\to 0-}$, we get the opposite inequality
  $m'(\la)\geq -\f{\om_\la}{2}$. Altogether, $m'(\la)=-\f{\om_\la}{2}$. 

  Next, we show that $m$ is concave down. To this end,  apply 
   \eqref{500}  for $h=\f{\chi_\la}{2 \dpr{\chi_\la}{\vp_\la}}$, where $\chi_\la: \|\chi_\la\|=1$ is the eigenfunction, corresponding to the negative eigenvalue of $\cl_+$, that is $\cl_+ \chi_\la=-\si_\la^2 \chi_\la$
   $$
   m(\la+\eps+ \eps^2 \|h\|^2) - m(\la)\leq -\f{\eps}{2} \om_\la  -   \f{\eps^2}{2} \om_\la \|h\|^2- 
   \f{\eps^2}{2} \si_\la^2 \|h\|^2+O(\eps^3)
   $$
   Introduce now $\de:=\eps+ \eps^2 \|h\|^2$, so that the previous inequality reads 
   \begin{equation}
   \label{508} 
   m(\la+\de)-m(\la)\leq -\f{\eps_\de}{2} \om_\la  -   \f{\eps_\de^2}{2} \om_\la \|h\|^2- 
   \f{\eps_\de^2}{2} \si_\la^2 \|h\|^2+O(\de^3),
   \end{equation}
   where $\eps_\de$ is given by the quadratic equation formula 
\begin{equation}
\label{509} 
 \eps_\de=\f{-1+\sqrt{1+4\de \|h\|^2}}{2\|h\|^2}= \de- \de^2 \|h\|^2+O(\de^3). 
\end{equation}
   Applying \eqref{508} to $-\de$ instead of $\de$ and adding the result to \eqref{508} yields 
   \begin{equation}
   \label{510} 
   m(\la+\de)+m(\la-\de) - m(\la)\leq -\f{\eps_\de+\eps_{-\de}}{2} \om_\la  -   \f{\eps_\de^2+\eps_{-\de}^2}{2} \om_\la \|h\|^2- 
  \f{\eps_\de^2+\eps_{-\de}^2}{2}  \si_\la^2 \|h\|^2+O(\de^3). 
   \end{equation}
   Taking into account the asymptotics \eqref{509}, we conclude 
   \begin{equation}
   \label{512} 
   m(\la+\de)+m(\la-\de) - m(\la)\leq  
   -\de^2 \si_\la^2 \|h\|^2+O(\de^3). 
   \end{equation}
   Dividing by $\de^2$, taking $\sup_{\la\in (a,b)}$ on any interval $(a,b)\subset \rone_+$ and taking a limit in $\de\to 0+$ allows to conclude 
   $$
   \lim_{\de\to 0+} \sup_{\la\in (a,b)} \f{m(\la+\de)+m(\la-\de) - m(\la)}{\de^2}\leq 0
   $$
   invoking Lemma \ref{7}, we derive that $m$ is concave down on $\rone_+$. This of course means that the $\om(\la)$ is non-decreasing, differentiable a.e. in $\la$  and from \eqref{512}, we can in fact derive the estimate a.e. in $\la$ 
   $$
  \om'(\la)=-2m''(\la) > \f{ \si_\la^2}{2\dpr{\chi_\la}{\vp_\la}^2}>0
   $$
  Now that we know that $m$ is concave down, it means that it has a left and right derivatives everywhere.  Note that even when $m$ does not have a derivative, we can still take limits in \eqref{502} (and its analog for $\eps<0$) to obtain 
  \begin{equation}
  \label{513} 
   m'(\la+)\leq -\f{\om(\la, \vp_\la)}{2}\leq m'(\la-).
 \end{equation}
    In particular, for every $0<\la_1<\la_2<\infty$, we have from \eqref{513} 
   $$
   \om(\la_1, \vp_{\la_1})\leq -2m'(\la_1+)\leq -2m'(\la_2-) \leq \om(\la_2, \vp_{\la_2}). 
   $$
   Combining the last estimate with \eqref{513} provides a direct proof that $m'$ is non-increasing function as well. 
   \end{proof}

   \section{Non-degeneracy of the waves and spectral stability} 
   \label{sec:4} 
   Non-degeneracy of the waves not only plays an important role in the stability considerations, 
   but it is of interest in its own. In particular, it always seems to be  an important first step towards uniqueness of the waves, as solutions to the corresponding profile equations, e.g. \eqref{30}. We start with a less ambitious task, which turns out to be the main step towards the non-degeneracy, we call it {\it weak non-degeneracy}.  
  \subsection{Weak non-degeneracy of $\vp_\la$} 
   \begin{lemma}
   	\label{weaknond} The constrained minimizers $\vp_\la$ produced in Proposition \ref{prop:10} enjoy the weak \\ non-degeneracy property,  that is $\vp_\la\perp Ker[\cl_+]$. 
   \end{lemma}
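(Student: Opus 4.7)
The plan is to prove weak non-degeneracy by contradiction, using only the spectral information for $\cl_+$ already established in Proposition \ref{prop:10}: namely $n(\cl_+)=1$ with a simple negative eigenvalue $-\si_\la^2$ and eigenfunction $\chi_\la$, and $\cl_+|_{\{\vp_\la\}^\perp}\ge 0$. The point of the second property is that any trial function orthogonal to $\vp_\la$ must yield a non-negative quadratic form; consequently, any attempt to produce a ``test vector'' in $\{\vp_\la\}^\perp$ with $\dpr{\cl_+ u}{u}<0$ is forbidden, and we will manufacture exactly such a vector from a hypothetical non-perpendicular kernel element.

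Suppose, for contradiction, that some $\psi\in Ker[\cl_+]$ satisfies $\dpr{\vp_\la}{\psi}\neq 0$.  Since $\chi_\la$ and $\psi$ are eigenfunctions of the self-adjoint operator $\cl_+$ corresponding to the distinct eigenvalues $-\si_\la^2$ and $0$, one has automatically $\dpr{\chi_\la}{\psi}=0$. Set
$$
\alpha:=\f{\dpr{\chi_\la}{\vp_\la}}{\dpr{\psi}{\vp_\la}},\qquad \tilde\chi:=\chi_\la-\alpha\psi,
$$
so that by construction $\dpr{\tilde\chi}{\vp_\la}=0$. Since $\cl_+\psi=0$, a direct computation gives $\cl_+\tilde\chi=-\si_\la^2\chi_\la$, and hence, using $\dpr{\chi_\la}{\psi}=0$,
$$
\dpr{\cl_+\tilde\chi}{\tilde\chi}=-\si_\la^2\dpr{\chi_\la}{\chi_\la-\alpha\psi}=-\si_\la^2\|\chi_\la\|^2<0.
$$
This contradicts $\cl_+|_{\{\vp_\la\}^\perp}\ge 0$. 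Therefore no such $\psi$ can exist, i.e.\ $\vp_\la\perp Ker[\cl_+]$.

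The main conceptual obstacle is that the customary shortcut - differentiate the Euler-Lagrange equation in $\om$ to get $\cl_+[\p_\om\vp_\om]=-\vp$, whence $\vp\in \mathrm{Range}(\cl_+)\perp Ker[\cl_+]$ - is unavailable, because the smoothness of $\om\to\vp_\om$ is precisely what we do not know. The argument above sidesteps the differentiability issue entirely, relying only on the quadratic-form inequality on $\{\vp_\la\}^\perp$.  Note also that we do not yet claim $Ker[\cl_+]=\mathrm{span}[\vp_\la']$; that upgrade to \emph{strong} non-degeneracy is postponed to Lemma \ref{sl}, where the fractional Sturm-Liouville oscillation theorem of Lemma \ref{hmj} will be combined with the weak non-degeneracy just proved.
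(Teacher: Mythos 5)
Your proof is correct, but it takes a genuinely different route from the paper's. You perturb the negative eigenfunction $\chi_\la$ by a hypothetical non-orthogonal kernel element $\psi$ so as to land in $\{\vp_\la\}^{\perp}$, and since $\cl_+\psi=0$ and $\dpr{\chi_\la}{\psi}=0$ (distinct eigenvalues of a self-adjoint operator) the quadratic form is unchanged, $\dpr{\cl_+\tilde\chi}{\tilde\chi}=-\si_\la^2\|\chi_\la\|^2<0$, contradicting $\cl_+|_{\{\vp_\la\}^{\perp}}\ge 0$. The paper instead projects the kernel element itself onto $\{\vp_\la\}^{\perp}$ via $\tilde h=h-\la^{-1}\dpr{h}{\vp_\la}\vp_\la$, which reduces the contradiction to the single scalar inequality $\dpr{\cl_+\vp_\la}{\vp_\la}<0$; establishing that sign is the bulk of the paper's proof and requires genuinely variational input, namely $m(0+)=0$, the representation $-2m(\la)=\int_0^{\la}\om(\mu)\,d\mu$ from \eqref{65}, and the monotonicity \eqref{517} of $\la\mapsto\om(\la,\vp_\la)$. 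Your argument needs none of that: it is a purely linear-algebraic consequence of $n(\cl_+)=1$ together with $\cl_+|_{\{\vp_\la\}^{\perp}}\ge 0$, both already supplied by Proposition \ref{prop:10}, so it is shorter and applies verbatim to any self-adjoint operator with a simple negative ground state whose form is nonnegative on a codimension-one hyperplane. What the paper's longer route buys as a byproduct is the quantitative fact $\dpr{\cl_+\vp_\la}{\vp_\la}=-\int\vp^3-a\int\vp<0$, i.e.\ that $\vp_\la$ itself is a negative direction for $\cl_+$, which your argument does not produce. Two minor points you should make explicit: $\alpha$ is well defined precisely because $\dpr{\psi}{\vp_\la}\neq 0$ is the contradiction hypothesis (and the argument still works if $\dpr{\chi_\la}{\vp_\la}=0$, in which case $\alpha=0$), and $\tilde\chi\neq 0$ since $\chi_\la\perp\psi$ gives $\|\tilde\chi\|^2=\|\chi_\la\|^2+\alpha^2\|\psi\|^2>0$.
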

  \begin{proof}
  	We first establish that $\dpr{\cl_+ \vp_\la}{\vp_\la}<0$. We have, using \eqref{50},  
  	$$
  	\dpr{\cl_+ \vp_\la}{\vp_\la}=\|\La^{\al/2} \vp\|^2 +\om \la - 2\int\vp^3= - \int\vp^3-a\int \vp. 
  	$$
  	This is clearly negative if $a\geq 0$, but  the sign of it is not easily determined, if $a<0$. 
  	
  	In order to show this, we shall need to see first that $m(0+)=0$. As before, trying the function $u=\sqrt{\f{\la}{2}}$ yields a bound from above,  $m(\la) \leq -\f{\la^{3/2}}{3\sqrt{2}} +a\sqrt{2\la}$, which implies $m(0+) \leq 0$. For the bound from below, we use \eqref{60}, which implies that for all $\eps>0$, 
  	$\|u\|_{L^3}^3\leq \eps \|\La^{\al/2} u\|^2+ C_\eps \la^{\f{3\al-1}{2\al-1}}$, which in turn implies 
  	$$
  	m(\la)\geq \inf_{\|u\|^2=\la}[\f{1}{4} \|\La^{\al/2} u\|^2 - a \int_{-1}^1 |u| dx]-C \la^{\f{3\al-1}{2\al-1}}
  	\geq -C(\sqrt{\la}+\la^{\f{3\al-1}{2\al-1}}),
  	$$
  	Taking $\lim_{\la\to 0+}$ yields the bound $m(0+) \geq 0$, and subsequently $m(0+)=0$.  Now, using \eqref{65}, with $\la_1=0+$,  and the fact that $\la\to \om_\la$ is non-decreasing (i.e. the property  \eqref{517}) 
  $$
  -2m(\la)=\int_0^\la \om(\mu) d\mu\leq \la \om(\la-)\leq  \la \om(\la, \vp_{\la}). 
  $$
  	It follows that 
  	$$
  	0\leq 2 m(\la)+\la \om(\la, \vp_{\la}) =(\|\La^{\al/2} \vp\|^2 -\f{2}{3}\int \vp^3+2 a\int \vp) + (\int \vp^3 -\|\La^{\al/2} \vp\|^2 -a\int \vp) = \f{1}{3} \int \vp^3+a\int \vp.
  	$$
  	In particular, 
  	$$
  		\dpr{\cl_+ \vp}{\vp}=- \int\vp^3-a\int \vp<-(\f{1}{3} \int \vp^3+a\int \vp)\leq 0.
  	$$
  	We now apply the property $\cl_+|_{\{\vp\}^\perp}\geq 0$. More concretely, take $h\in Ker[\cl_+]$ and consider $\tilde{h}:=h-\la^{-1} \dpr{h}{\vp_\la}\vp_{\la}\perp \vp_\la$. It must be that 
  	$$
  	0\leq \dpr{\cl_+ \tilde{h}}{\tilde{h}}=\la^{-2} \dpr{h}{\vp_\la}^2 \dpr{\cl_+ \vp}{\vp} 
  	$$ 
   Assuming  $\dpr{h}{\vp_\la}\neq 0$, this leads to a contradiction, as the right-hand side is strictly negative. Thus, $\dpr{h}{\vp_\la}=0$ and the weak non-degeneracy is established.

  \end{proof}
  \subsection{Non-degeneracy of $\vp_\la$: conclusion of the proof} 
   We now continue with the goal of establishing that the wave $\vp_\la$ is non-degenerate, that is $Ker[\cl_+]=span[\vp_\la']$. 
  	Note that we always have $\vp_\la'\subset Ker[\cl_+]$. 	We claim that  $Ker[\cl_+]$ is at most two dimensional. Indeed, we know already that $n(\cl_+)=1$, so $\la_0(\cl_+)<0$. Since $0$ is an eigenvalue, it must be that $\la_1(\ch)=0$.  By bell-shapedness, one of the corresponding eigenfunctions, $\vp_\la'$  is an odd function, which has exactly one zero, at $x=0$. Since $\cl_+$ is a fractional Schr\"odinger operator with even potential, the linearly independent eigenfunctions may be taken to be either even or odd.  
  	
  	By the Sturm-Liouville's theory for the fractional periodic Schr\"odinger operators, see  Lemma \ref{hmj}, we have that the eigenfunctions corresponding to the zero eigenvalue have at most two zeroes in $[-T,T]$. Clearly, there cannot be another odd eigenfunction (other than $\vp_\la'$), since it would have to have exactly one zero, which happens at $x=0$, and as such, it cannot possibly be orthogonal to $\vp_\la'$. Thus, there could be another eigenfunction, say $\Psi_\la: \|\Psi_\la\|_{L^2}=1$, which is  even and which has exactly two zeros (since it cannot have one zero), at say $\pm b, b\in (0,T)$. Note that similar to Proposition \ref{prop:apost}, it can be shown that $\Phi_\la\in H^{\infty}[-T,T]$. 
  	Thus, we have proved the following preliminary result
  	\begin{lemma}
  		\label{sl} 
  		For the fractional Schr\"odinger operator $\cl_+$, we have that either $Ker[\cl_+]=span[\vp_\la']$ or $Ker[\cl_+]=span[\vp_\la', \Psi_\la]$, where  $\Psi_\la:[-T,T]\to \rone$ is a smooth even function, with exactly two zeroes,  $\Psi(-b)=\Psi(b)=0$, with $\Psi_\la|_{(-b,b)}>0$, where $b\in (0,T)$. 
  	\end{lemma}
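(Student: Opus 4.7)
The plan is to combine three ingredients. First, differentiating the Euler--Lagrange equation \eqref{456} in $x$ gives $\cl_+[\vp_\la']=0$, so $\vp_\la'\in Ker[\cl_+]$; bell-shape forces $\vp_\la$ to be even with unique maximum at $0$, so $\vp_\la'$ is odd with a single sign change at $0$. Second, because $\vp_\la$ is even, $\cl_+ = \La^\al + \om - 2\vp_\la$ commutes with the reflection $x\mapsto -x$, giving the parity decomposition $Ker[\cl_+] = K_{\mathrm{even}}\oplus K_{\mathrm{odd}}$. Third, by Proposition~\ref{prop:10} the ground state $\chi_\la$ of $\cl_+$ is simple, strictly positive (hence even) and corresponds to the unique negative eigenvalue; since $0\in\sigma(\cl_+)$, it follows that $\la_1(\cl_+)=0$. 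By the fractional Sturm--Liouville Lemma~\ref{hmj}, any $\psi\in Ker[\cl_+]$ then has at most $2$ sign changes on $[-T,T]$.

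For the odd subspace: any $\psi\in K_{\mathrm{odd}}$ vanishes at $0$ by oddness, producing one sign change there, and an additional symmetric pair $\pm x_0$ of zeros would force at least $3$ sign changes, contradicting the Sturm--Liouville bound. Hence any $\psi\in K_{\mathrm{odd}}$ is of one sign on $(0,T)$ and of the opposite sign on $(-T,0)$; the same holds for $\vp_\la'$. Consequently $\vp_\la'(x)\psi(x)$ is of constant sign away from $0$, forcing $\dpr{\vp_\la'}{\psi}\neq 0$ unless $\psi\equiv 0$. Since eigenvectors within the same eigenspace can always be orthogonalized, we conclude $K_{\mathrm{odd}}=\mathrm{span}[\vp_\la']$.

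For the even subspace: any nonzero $\Psi\in K_{\mathrm{even}}$ is $L^2$-orthogonal to the positive ground state $\chi_\la$ (they belong to distinct eigenvalues of the self-adjoint $\cl_+$), so $\Psi$ must change sign. By evenness, sign changes come in symmetric pairs $\pm b$, and Lemma~\ref{hmj} permits only one such pair; thus $\Psi$ has exactly two zeros at $\pm b$ for some $b\in(0,T)$ with constant sign on each of $(-b,b)$ and $[-T,T]\setminus[-b,b]$. Rescaling so that $\Psi|_{(-b,b)}>0$ yields the claimed $\Psi_\la$. Smoothness $\Psi_\la\in H^\infty[-T,T]$ follows verbatim from the bootstrap used in the proof of Proposition~\ref{prop:apost}, applied to the identity $(\La^\al+\om+A)\Psi_\la = (A+2\vp_\la)\Psi_\la$.

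The main technical obstacle is the uniqueness of $\Psi_\la$, i.e., showing $\dim K_{\mathrm{even}}\le 1$. If $\Psi_1,\Psi_2$ were two $L^2$-orthogonal even kernel elements, each would exhibit the two-zero structure above with distinct nodes $\pm b_1\neq\pm b_2$. Form the nonzero linear combination $\tilde\Psi = \Psi_2(0)\Psi_1 - \Psi_1(0)\Psi_2\in K_{\mathrm{even}}$, which satisfies $\tilde\Psi(0)=0$; tracking its sign-change pattern against both the constraint $\tilde\Psi\perp \chi_\la$ (which forces at least one symmetric sign-change pair $\pm b'$) and Lemma~\ref{hmj} (which caps the total at two sign changes) yields the desired contradiction, essentially amounting to a simplicity statement for the first excited eigenvalue of the restriction $\cl_+|_{L^2_{\mathrm{even}}}$ in the fractional Sturm--Liouville theory.
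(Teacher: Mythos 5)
Your argument follows the same route as the paper's: differentiate the profile equation to get $\cl_+\vp_\la'=0$, use $n(\cl_+)=1$ to place the kernel at $\la_1=0$, split $Ker[\cl_+]$ by parity, and invoke the fractional Sturm--Liouville bound of Lemma \ref{hmj} to pin down the sign structure of the kernel elements. Your treatment of the odd subspace (a second odd element, orthogonalized against $\vp_\la'$, would have to be single-signed on $(0,T)$ and hence could not be orthogonal to $\vp_\la'$) and of the even elements (orthogonality to the positive ground state forces a symmetric pair of sign changes $\pm b$, and the bound of two sign changes forbids more) is exactly what the paper does, and it is correct — modulo the same mild conflation of ``zeros'' with ``sign changes'' that the paper also makes, which is harmless since only the sign pattern is used downstream.

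The one place where you go beyond the paper is the claim $\dim K_{\mathrm{even}}\le 1$, which you rightly flag as the main obstacle; unfortunately the sketch does not close. The combination $\tilde\Psi=\Psi_2(0)\Psi_1-\Psi_1(0)\Psi_2$ is even and vanishes at the origin, but an even function cannot \emph{change sign} at $x=0$, so this zero contributes nothing to the sign-change count: $\tilde\Psi$ could have a touching zero at the origin together with exactly one symmetric pair of sign changes at $\pm b'$, which is perfectly consistent with Lemma \ref{hmj}. For $\al<2$ there is no ODE uniqueness available to upgrade a degenerate zero of $\tilde\Psi$ to $\tilde\Psi\equiv 0$, so the contradiction you gesture at is not there. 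To be fair, the paper asserts ``$Ker[\cl_+]$ is at most two dimensional'' without proving that the even part is one-dimensional either, and the gap is immaterial for the intended application: the subsequent argument with $Q(x)=\vp(x)(\vp(x)-\vp(b))$ eliminates \emph{every} even kernel element with the two-zero sign structure, however many there are. If you want the lemma exactly as stated, you would need a simplicity statement for the restriction of $\cl_+$ to even functions (e.g.\ from \cite{HMJ}); otherwise it suffices to prove the weaker assertion that every element of $Ker[\cl_+]$ orthogonal to $\vp_\la'$ is even with exactly two sign changes at some $\pm b\in(0,T)$, which is all that the non-degeneracy argument uses.
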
    
    By direct calculations,
    $
    \cl_+ [1]= \om - 2\vp_\la.
    $
    In particular $\om - 2\vp_\la\perp Ker[\cl_+]$. On the other hand,  $\vp_\la\perp Ker[\cl_+]$ by Lemma \ref{weaknond}. It follows that $1 \perp Ker[\cl_+]$, provided $\om\neq 0$.  Furthermore,
    $$
    \cl_+[\vp_\la]=-\vp_\la^2-a
    $$ 
    Thus, $-\vp_\la^2-a \perp Ker[\cl_+]$, so in particular $\vp_\la^2 \perp Ker[\cl_+]$. But now, we consider the function $Q(x):=\vp^2(x)-\vp(b) \vp(x)$. By construction $Q\perp Ker[\cl_+]$, so it must be that $\dpr{Q}{\Psi_\la}=0$. On the other hand, recall that $\vp_\la$ is bell-shaped, so
    $
    Q(x)=\vp(x)(\vp(x)-\vp(b))
    $
    is positive in $(-b,b)$ and it is negative in $b<|x|<1$. But this is exactly the behavior of $\Psi_\la$, in fact $Q(x) \Psi_\la(x)\geq 0$ for $-T<x<T$. Thus, $\dpr{Q}{\Psi_\la}=0$ is impossible, a contradiction. Thus, $\vp_\la$ is non-degenerate, when $\om\neq 0$. This is of course exactly the case when $a\neq \f{\la}{2}$. 
     
   \section{Orbital stability of the waves} 
   \label{sec:5} 
   We present the proof of the orbital stability, following a variation of the classical 
   T.B. Benjamin's method.  Here is a good point to discuss why the smoothness properties of the map $\la\to \vp_\la$ matters a great deal.    
   Following Benjamin's original approach,  one first considers initial data  $u_0\in H^{\f{\al}{2}}: \|u_0-\vp_\la\|_{H^{\f{\al}{2}}}<<1$, but with the additional property $P(u_0)=P(\vp_\la)=\la$. In the second step, one  removes this assumption $P(u_0)=P(\vp_\la)=\la$, that is, take  $u_0: P(u_0)\neq P(\vp_\la)$, while still close to $\vp_\la$ in $H^{\f{\al}{2}}$ metric. It has to be noted that in the original work of Benjamin, as well as many subsequent works, this second step almost automatically reduces to the first one, {\it if  the mapping $\la\to \vp_\la$ is   at least continuous  as a Banach space valued mapping\ into $L^2$}. 
   
   In some instances, for example in the classical case of a single  power non-linearity for problems posed on the line $\rone$, the function $\la\to \vp_\la$ is explicitly known by scaling arguments, and smooth by inspection,  as stated.  Virtually in all other cases, like for the waves constructed herein, scaling is not available and this becomes non-trivial. On the other hand, many authors feel that this is a natural assumption and they  explicitly take this as an assumption (and even stronger assumptions like the differentiability in spaces stronger than $L^2$), while others tacitly assume it in their arguments.  We emphasize once again that the proof presented herein   does not make any explicit assumptions beyond what is already established rigorously in   Lemma \ref{le:65}. 
   
   We start with the simpler fractional KdV case, as it presents itself with a single symmetry, namely space translation. 
      \subsection{Orbital stability for the fKdV}
   More precisely, we show 
   \begin{proposition}
   	\label{prop:lw} 
   	Let  $\vp$ be a wave, satisfying the profile equation, \eqref{30}.  Let  the conditions $(1), (2), (3)$ of Assumption  \ref{defi:15}  are satisfied and in addition the following holdс: 
   	\begin{itemize}
   		\item The operator $\cl_+=\La^\al+\om-2 \vp$ satisfies $\cl_+|_{\{\vp\}^\perp}\geq 0$. 
   		\item $\vp$ is non-degenerate, i.e. $Ker[\cl_+]=span[\vp']$.
   	\end{itemize}  
   	Then, $\vp$ is orbitally  stable. In particular, for every $\la>0, a\neq \f{\la}{2}$, the constrained minimizers $\vp_\la$ for the problem \eqref{40} are  orbitally stable.  
   \end{proposition}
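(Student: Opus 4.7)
The plan is to follow T.~B.~Benjamin's Lyapunov-functional strategy, combined with a contradiction/rescaling argument that bypasses any smoothness of the family $\la\mapsto\vp_\la$. Set $\la:=\cp(\vp)$. The centerpiece is the local coercivity estimate
\[
\ce_a(v)-\ce_a(\vp)\;\geq\;c_0\,\inf_{y\in[-T,T]}\|v(\cdot+y)-\vp\|_{H^{\al/2}}^2\qquad(\ast)
\]
for some $\eta_0,c_0>0$ (depending on $\vp$), valid whenever $v\in H^{\al/2}$ satisfies $\cp(v)=\la$ and $\inf_y\|v(\cdot+y)-\vp\|_{H^{\al/2}}\leq\eta_0$.

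To prove $(\ast)$, I would modulate by the standard implicit function argument and choose $y(v)$ so that $h:=v(\cdot+y(v))-\vp$ satisfies $h\perp\vp'$ in $L^2$. Expanding $\ce_a$ around $\vp$ and using the profile equation \eqref{30} to rewrite $\La^\al\vp-\vp^2+a=-\om\vp$, the linear term becomes $-\om\langle\vp,h\rangle$; the $\cp$-constraint yields $\langle\vp,h\rangle=-\tfrac12\|h\|_{L^2}^2$, which absorbs exactly into the Hessian and gives
\[
\ce_a(v)-\ce_a(\vp)=\tfrac12\langle\cl_+h,h\rangle+O(\|h\|_{H^{\al/2}}^3).
\]
For the lower bound on the quadratic form, I decompose $h=\beta\vp+h_1$ with $h_1\perp\vp$; since $\vp\perp\vp'$ (integration by parts on $[-T,T]$), the constraint forces $\beta=-\|h\|_{L^2}^2/(2\la)$ to be quadratically small, while $h_1$ inherits $h_1\perp\vp'$. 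On $\{\vp\}^\perp\cap\{\vp'\}^\perp$ the hypotheses $\cl_+|_{\{\vp\}^\perp}\geq 0$ and $Ker[\cl_+]=span[\vp']$, together with the compact-resolvent spectral gap, yield $\langle\cl_+h_1,h_1\rangle\geq c_1\|h_1\|_{L^2}^2$; the cross term $2\beta\langle\cl_+\vp,h_1\rangle$ and the $\beta^2$ contribution are $O(\|h\|^3)$, and the elliptic bound $\cl_+\geq\La^\al-C$ upgrades the $L^2$-coercivity to $\langle\cl_+h,h\rangle\geq c_2\|h\|_{H^{\al/2}}^2-O(\|h\|^3)$, from which $(\ast)$ follows by absorbing the cubic remainder for $\|h\|$ sufficiently small.

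With $(\ast)$ in hand I argue by contradiction. Suppose $\vp$ is not orbitally stable: there exist $\eps_0\in(0,\eta_0/2)$, a sequence $u_{0,n}\in X$ with $\|u_{0,n}-\vp\|_{H^{\al/2}}\to 0$, and (by continuous dependence from Assumption \ref{defi:15}(1)) times $t_n$ chosen so that $\inf_y\|u_n(t_n,\cdot+y)-\vp\|_{H^{\al/2}}=\eps_0$. Let $v_n:=u_n(t_n,\cdot+y_n)$ for a minimizing $y_n$. Conservation (Assumption \ref{defi:15}(3)) gives $\cp(v_n)\to\la$ and $\ce_a(v_n)\to\ce_a(\vp)$. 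Rescale $\tilde v_n:=\sqrt{\la/\cp(v_n)}\,v_n$: since the $v_n$ are $H^{\al/2}$-bounded (by $\|\vp\|_{H^{\al/2}}+\eps_0$), $\|\tilde v_n-v_n\|_{H^{\al/2}}=o(1)$, so $\cp(\tilde v_n)=\la$, $\ce_a(\tilde v_n)\to\ce_a(\vp)$, and $\inf_y\|\tilde v_n(\cdot+y)-\vp\|_{H^{\al/2}}\to\eps_0$. Applying $(\ast)$ to $\tilde v_n$ for large $n$ forces $o(1)\geq c_0(\eps_0/2)^2>0$, a contradiction. The main obstacle in the scheme is $(\ast)$ itself — specifically, verifying that the $\cp$-constraint really tames the one negative direction of $\cl_+$, which is precisely where $n(\cl_+)=1$, $\cl_+|_{\{\vp\}^\perp}\geq 0$, and the strong non-degeneracy $Ker[\cl_+]=span[\vp']$ established in Section \ref{sec:4} all enter. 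The reduction of general $H^{\al/2}$-close data to the $\cp$-constrained case by pointwise rescaling (rather than by an implicit function theorem in the parameter $\la$) is exactly what lets us avoid any smoothness assumption on $\la\mapsto\vp_\la$.
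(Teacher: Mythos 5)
Your argument is correct in substance and reaches the same conclusion by the same overall Lyapunov-plus-contradiction philosophy, but the mechanics differ from the paper's proof in two ways worth noting. First, the paper's Lyapunov functional is the augmented energy $E=\ch+\f{\om}{2}\cp+a\cm$, which satisfies $E'(\vp)=0$ \emph{off} the constraint sphere; it never forces $\cp(u)=\la$, but instead decomposes the modulated perturbation as $\psi_n=\mu_n\vp+\eta_n$ with $\eta_n\perp\vp$ and uses conservation of $\cp$ to show $|\mu_n|\lesssim \eps_n+\|\psi_n\|_{L^2}^2$, so the single negative/neutral direction is quadratically small. You instead restrict the (un-augmented) energy to the sphere $\cp=\la$, recover the same quadratic form $\f12\dpr{\cl_+h}{h}$ via the identity $\dpr{\vp}{h}=-\f12\|h\|^2$, and reduce general data to the sphere by the pointwise rescaling $\tilde v_n=\sqrt{\la/\cp(v_n)}\,v_n$. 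Both devices accomplish the same thing --- avoiding any smoothness of $\la\mapsto\vp_\la$ --- and your rescaling is arguably the more elementary of the two; the paper's version buys a uniform-in-time bound $\|\psi_n(t)\|_{H^{\al/2}}^2\leq D\eps_n$ on $(0,T_n^*)$ via a continuation argument, whereas you only evaluate at a single exit time $t_n$, which suffices for the contradiction provided you justify (as you implicitly do, via Assumption~1(2) and an intermediate-value argument, and via the reduction ``instability at level $\eps_0$ implies instability at every smaller level'') that $t_n$ can be chosen with the modulated distance pinned in $[\eps_0/2,\eps_0]$ with $\eps_0$ below the coercivity radius.

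Two small points to tighten. The functional $\ce_a$ of \eqref{40} contains $|\vp|^3$ and $|\vp|$, and Assumption~1(3) only guarantees conservation of $\ch$ and $\cm$ (no absolute values); since for $\al\leq 1$ one has $H^{\al/2}\not\hookrightarrow L^\infty$ and perturbed solutions need not stay positive, you should run the expansion with $\ch+a\cm$ rather than $\ce_a$ --- the linear term is still $-\om\dpr{\vp}{h}$ by the profile equation, so nothing else changes. Second, the step $\dpr{\cl_+h_1}{h_1}\geq c_1\|h_1\|_{L^2}^2$ on $\{\vp,\vp'\}^\perp$ deserves a sentence: it follows from $n(\cl_+)\le 1$ off $\{\vp\}^\perp$, $Ker[\cl_+]=span[\vp']$, discreteness of the spectrum, and a standard min--max/compactness argument (as in Weinstein or Grillakis--Shatah--Strauss); the paper likewise invokes it without detailed proof.
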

   
   \begin{proof}
   Our proof proceeds  by a contradiction argument. More precisely, assuming that orbital stability does not hold, 
   there is a $\eps_0>0$ and a sequence of initial data $u_n:\lim_n\|u_n-\vp\|_{H^{\f{\al}{2}}}=0$, while for the corresponding solutions  
   \begin{equation}
   \label{orb:10}
   \sup_{0\leq t < \infty} \inf_{r\in \rone} \|u_n(t, \cdot) - \vp(\cdot-r)\|_{H^{\f{\al}{2}}} \geq \eps_0, \ n=1,2, \ldots 
   \end{equation}	
   	Note the conservation of total energy 
   	\begin{eqnarray*}
   		E[u] &=& \ch[u]+\f{\om}{2}  \cp[u]+a \cm[u]= \\
   		&=& \f{1}{2} [\int_{-T}^T |\La^{\al/2} u(t,x)|^2 dx + \om \int_{-T}^T u^2(t,x)  dx] - 
   		\f{1}{3} \int_{-T}^T u^3(t,x) + a \int_{-T}^T u(t,x) dx.
   	\end{eqnarray*}
   The profile equation, \eqref{30}, is clearly equivalent to $E'[\vp]=0$. Introduce 
   $$
   \eps_n:= |\ce(u_n(t))-\ce(\vp)|+|\cp(u_n(t))-\cp(\vp)|, 
   $$
   which is conserved in time. Note that $\lim_n \eps_n=0$, since $\lim_n \|u_n-\vp\|_{H^{\f{\al}{2}}}=0$. 
   
   	 For $0<\eps<<1$, consider a neighborhood $\mathcal{U}_{\varepsilon}$ in the set of all real-valued functions, which are close to  translations of $\vp$. More precisely, introduce 
   	 $$
   	 \mathcal{U}_{\varepsilon}=\{ u\in H_{real}^{\f{\al}{2}}[-T,T] \; \; : \; \; \inf_{r\in \rone} ||u-\varphi(\cdot -r)||_{H^{\f{\al}{2}}}<\varepsilon \}.
   	 $$
   	 By Lemma 3.2, \cite{GSS}, see also Lemma 7.7, p. 95 in \cite{Pbook},   there exists $\eps_0(\vp)>0$, so that for all $0<\eps<\eps_0(\vp)$, there  is a unique $C^1$ map $\beta : \mathcal{U}_{\varepsilon} \mapsto \mathds{R}$ such that 
   	 \begin{equation}
   	 \label{os1}
   	 \langle u(\cdot +\beta(u)), \varphi'\rangle =0, \ \ \be(\vp)=0.
   	 \end{equation}     
   	 Since we need $\eps<\min(\eps_0(\vp), \eps_0)$, take the new  $\eps_0$ to be 
   	 $\f{1}{10} \min(\eps_0, \eps_0(\vp), 1)$.   
   	 
   	 Fix for the moment $\eps<\eps_0<1$. By the continuity of the solution map (as required in  Assumption  \ref{defi:15})  and the map $\be$, 
   	 we have that there exists $t_n=t_n(\eps)>0$, so that 
   	 $\sup_{0\leq t<t_n} \|u_n(t, \cdot)-\vp\|_{H^{\f{\al}{2}}}<\f{\eps}{2}$ and $\be(u_n(t))$ is so close to $\be(\vp)=0$,  so that 
   	 $$
   	 \|\vp-\vp(\cdot-\be(u_n(t)))\|_{H^{\f{\al}{2}}}<\f{\eps}{2}. 
   	 $$
   	Consequently, for $t\in (0, t_n)$, 
   	\begin{eqnarray*}
   		& & \|u_n(t, \cdot+\be(u_n(t)))-\vp\|_{H^{\f{\al}{2}}} = \|u_n(t, \cdot)-\vp(\cdot-\be(u_n(t)))\|_{H^{\f{\al}{2}}} \leq \\
   		&\leq &  \|u_n(t, \cdot)-\vp\|_{H^{\f{\al}{2}}}+ \|\vp-\vp(\cdot-\be(u_n(t)))\|_{H^{\f{\al}{2}}}<
   		\f{\eps}{2}+ \f{\eps}{2}=\eps. 
   	\end{eqnarray*}
   	Based on this, for large $n$ and $\eps<\eps_0(\vp)$, one may define $T_n^*=T_n^*(\eps)>0$, so that 
   	$$
   	T_n^*=\sup\{\tau_0:  \sup_{0<t<\tau_0}   \|u_n(t, \cdot+\be(u_n(t)))-\vp(\cdot)\|_{H^\f{\al}{2}}<\eps\}. 
   	$$
   	The previous calculation implies $T_n^*\geq t_n$.  
   	Our goal is to show that for all sufficiently small $\eps$, there exists $N_\eps$, so that for all  $n>N_\eps$, $T_n^*=\infty$, which will provide the required contradiction with \eqref{orb:10}. 
   	 
   	 We henceforth work with $t\in (0, T_n^*)$.   
   	 Denote 
   	 $$
   	 \psi_n(t, \cdot)=u_n(t,\cdot +\beta(u_n(t)))-\varphi(\cdot)=\mu_n(t) \varphi+\eta_n(t, \cdot),  \ \ \eta_n \perp \vp.  
   	 $$
   	 Note that the definition of $T_n^*$ is equivalent with 
   	\begin{equation}
   	\label{812} 
   \sup_{0<t<T_n^*} 	\|\psi_n(t)\|_{H^\f{\al}{2}}<\eps, \ \ \limsup_{t\to T_n^*-} 	\|\psi_n(t)\|_{H^\f{\al}{2}}\geq \eps.
   	\end{equation}
   	 We have that 
   	 \begin{eqnarray*}
   	 	\cp(u_n(t)) &=&   \cp(\varphi)+ 2 \langle \varphi, \mu_n(t) \varphi+\eta_n\rangle+  \|\psi_n\|_{L^2}^2=  \cp(\varphi)+2\mu_n(t) \|\varphi\|^2+ \|\psi_n\|_{L^2}^2. 
   	 \end{eqnarray*}    
   	  It follows that 
   	  $
   	  2\mu_n \|\varphi\|^2=\cp(u_n)-\cp(\vp)- \|\psi_n\|_{L^2}^2
   	  $
   	  whence 
   	  \begin{equation}
   	  \label{orb:30} 
   	  |\mu_n|\leq \f{|\cp(u_n)-\cp(\vp)|+\|\psi_n\|_{L^2}^2}{2\|\vp\|^2}\leq C (\eps_n+\|\psi_n\|_{L^2}^2).
   	  \end{equation}
   	  Since  $E'(\vp)=0$, 
   	  Taylor expanding and various Sobolev embedding estimates yield  the formula  
   	  \begin{eqnarray*}
   	  	E(u_n(t))-E(\varphi)  &=&  E(u_n(t, \cdot+\be(u_n(t)))) -E(\varphi)= E(\vp+\psi_n(t)) - E(\vp)  = \\
   	  	&=& 
   	  	\frac{1}{2}\langle \cl_+ \psi_n(t), \psi_n(t)\rangle+O(\|\psi_n(t)\|_{H^{\f{\al}{2}}}^3)=  \\
   	  	&=& \frac{1}{2}\langle \cl_+ \eta_n(t), \eta_n(t)\rangle+ 
   	  	\f{1}{2}(\mu_n^2\dpr{\cl_+ \vp}{\vp}+ 2\mu_n \dpr{\cl_+ \vp}{\eta_n}) +O(\|\psi_n(t)\|_{H^{\f{\al}{2}}}^3). 
   	  \end{eqnarray*}
   	    By construction, $\eta_n(t)\perp \vp$. In addition, from \eqref{os1}, we have that 
   	    $$
   	    \dpr{\eta_n(t)}{\vp'}=\dpr{u_n(t,\cdot+\be(u_n(t)))-\vp-\mu_n \vp}{\vp'}=0.
   	    $$ 
   	    So, $\eta_n(t)\perp span\{\vp, \vp'\}$.  Then, by the requirements of Proposition \ref{prop:lw},  $\cl_+|_{\{\vp\}^\perp}\geq 0$. In addition, by the non-degeneracy, $Ker[\cl_+]=span[\vp']$. Thus, 
   	    $$
   	    \cl_+|_{span\{\vp, \vp'\}^\perp}\geq \ka>0.
   	    $$
   	  In particular, 
   	  \begin{equation}
   	  \label{os3}
   	  \langle \cl_+ \eta_n(t), \eta_n(t)\rangle \geq \ka \|\eta_n(t)\|_{H^{\f{\al}{2}}}^2
   	  \end{equation}
   	 Regarding the other terms, note $\|\psi_n\|^2=|\mu_n|^2 \|\vp\|^2+ \|\eta_n\|^2\geq \|\eta_n\|^2$, whence 
   	 \begin{eqnarray*}
   	  \mu_n^2+ |\mu_n| \|\eta_n\|_{L^2}\leq C(\eps_n+ \|\psi_n\|_{H^{\f{\al}{2}}}^3). 
   	 \end{eqnarray*}
   	  Plugging this information into the expression for 
   	  $E(u_n(0))-E(\varphi) = E(u_n(t))-E(\varphi)$, we arrive at 
   	  \begin{equation}
   	  \label{orb:40} 
   	  \f{\ka}{2} \|\eta_n(t)\|_{H^{\f{\al}{2}}}^2 \leq C \eps_n + C \|\psi_n(t)\|_{H^{\f{\al}{2}}}^3. 
   	  \end{equation}
   	    By the definition of $\eta_n$ and \eqref{orb:30}, we have however for $t\in (0,T_n^*)$ 
   	    \begin{equation}
   	    \label{orb:50} 
   	    \|\eta_n(t)\|_{H^{\f{\al}{2}}}\geq \|\psi_n - \mu_n \vp\|_{H^{\f{\al}{2}}}\geq \|\psi_n(t)\|_{H^{\f{\al}{2}}}-|\mu_n| \|\vp\|_{H^{\f{\al}{2}}}\geq \|\psi_n(t)\|_{H^{\f{\al}{2}}}- C (\eps_n+\|\psi_n(t)\|_{H^{\f{\al}{2}}}^2),
   	    \end{equation}
   	 where the constant $C$ appearing in the previous inequality depends on $\la, \vp$, but not on $t, n$. At this point, select $\eps$ so small that $C\eps<\f{1}{2}$. It follows that for these values of $\eps$ and $t\in (0,T_n^*)$, by \eqref{orb:50}, 
   	 $$
   	  \|\eta_n(t)\|_{H^{\f{\al}{2}}}\geq \f{1}{2} \|\psi_n(t)\|_{H^{\f{\al}{2}}}- C \eps_n. 
   	 $$
   	 Plugging this back into \eqref{orb:40}, we obtain 
   	  \begin{equation}
   	  \label{815} 
   	  \|\psi_n(t)\|_{H^{\f{\al}{2}}}^2 \leq C\eps_n+C \|\psi_n(t)\|_{H^{\f{\al}{2}}}^3.
   	  \end{equation}
   	 Again, for the new constant $C$ that appears in \eqref{815}, select $\eps$ still maybe smaller, so that    	 $C\eps<\f{1}{2}$, so that we can finally conclude from \eqref{815}, 
   	 \begin{equation}
   	 \label{817} 
   	 \|\psi_n(t)\|_{H^{\f{\al}{2}}}^2 \leq D\eps_n,
   	 \end{equation}
   	 which is valid for such small $\eps$, for all $n$ and for all $t\in (0, T_n^*)$. But this means that $T_n^*=\infty$ for all large enough $n$. Indeed, for $\eps$ small as above, take $n$ so large that $\sqrt{D\eps_n}<<\eps$, which can be done since $\lim_n \eps_n=0$. Assuming that $T_n^*<\infty$ means that 
   	 $$
   	\sqrt{D\eps_n}\geq  \limsup_{t\to T_n^*-}  \|\psi_n(t)\|_{H^{\f{\al}{2}}}\geq \eps,
   	 $$
   	 a contradiction. So, $T_n^*(\eps)=\infty$ for all large enough $n$. This is now a contradiction with \eqref{orb:10}, once we pick $\eps$ small enough ( in order to satisfy the previous two conditions and in addition  $\eps<<\eps_0$) and then $n$ large enough so that $T_n^*(\eps)=\infty$.
   \end{proof}
     \subsection{Stability for the fNLS standing  waves}
     For this part of the argument, we take $a=0$ in \eqref{456}. We have similar to Proposition \ref{prop:lw}. 
     \begin{proposition}
     	\label{prop:nls} 
     	Let  $\vp$ be a wave, satisfying   Assumption  \ref{defi:15} and the following  
     	\begin{itemize}
     		\item The operator $\cl_+=\La^\al+\om-2 \vp$ satisfies $\cl_+|_{\{\vp\}^\perp}\geq 0$. 
     		\item The operator $\cl_-=\La^\al+\om- \vp$ satisfies $\cl_-|_{\{\vp\}^\perp}\geq \ka$, for some $\ka>0$. 
     		\item $\vp$ is non-degenerate, i.e. $Ker[\cl_+]=span[\vp']$. 
     	\end{itemize}  
     	Then, $\vp$ is orbitally  stable. In particular, for every $\la>0$ and $a=0$, the solutions $e^{i \om t} \vp_{\om_\la}$ of \eqref{20}, where  $\vp_\la$ are constrained minimizers for the problem \eqref{40} are  orbitally stable. 
     \end{proposition}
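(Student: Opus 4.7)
The proof will follow the contradiction/bootstrap scheme of Proposition \ref{prop:lw}, but adapted to accommodate the additional $U(1)$ phase symmetry of fNLS. First pass to the rotating frame by writing $u(t,x)=e^{i\omega t}\tilde u(t,x)$, so that $\vp$ is a stationary solution and the conserved quantity on which we rely is the augmented functional $E[u]=\ch[u]+\tfrac{\omega}{2}\cp[u]$, which satisfies $E'[\vp]=0$ exactly via the profile equation \eqref{30} with $a=0$.

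The plan is to suppose that stability fails: there exist $\eps_0>0$ and a sequence $u_n(0)\to\vp$ in $H^{\alpha/2}$ such that
\[
\sup_{t\ge 0}\inf_{y\in[-T,T],\,\theta\in[0,2\pi]}\|u_n(t,\cdot+y)-e^{i\theta}\vp(\cdot)\|_{H^{\alpha/2}}\ge\eps_0,
\]
with $\eps_n:=|E(u_n(t))-E(\vp)|+|\cp(u_n(t))-\cp(\vp)|\to 0$ (conserved in $t$). The orbit of $\vp$ under the two-parameter group is $\{e^{i\theta}\vp(\cdot-y)\}$; by an implicit function theorem argument (essentially Lemma 3.2 of \cite{GSS} applied to two modulation parameters), on a small $H^{\alpha/2}$-neighborhood $\mathcal U_\eps$ of this orbit there are unique $C^1$ maps $\beta(u),\theta(u)$ such that, writing
\[
e^{-i\theta(u)}u(\cdot+\beta(u))=\vp+\psi_1+i\psi_2,
\]
one has the orthogonality relations $\langle\psi_1,\vp'\rangle=0$ and $\langle\psi_2,\vp\rangle=0$ (obtained by requiring orthogonality to the infinitesimal generators $\vp'$ and $i\vp$ of the two symmetries). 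Introduce a tube time $T_n^*$ exactly as before, such that $\|\psi_1(t)\|_{H^{\alpha/2}}+\|\psi_2(t)\|_{H^{\alpha/2}}<\eps$ for $t<T_n^*$, and split further $\psi_1=\mu\vp+\eta_1$ with $\eta_1\perp\vp$, so that $\eta_1\perp\mathrm{span}\{\vp,\vp'\}$.

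Next, Taylor-expand the conserved quantities. Using $E'[\vp]=0$ and the fact that real and imaginary perturbations decouple to second order,
\[
E(u_n(t))-E(\vp)=\tfrac12\langle\cl_+\psi_1,\psi_1\rangle+\tfrac12\langle\cl_-\psi_2,\psi_2\rangle+O(\|\psi_1\|_{H^{\alpha/2}}^3+\|\psi_2\|_{H^{\alpha/2}}^3),
\]
while $\cp(u_n(t))-\cp(\vp)=2\mu\|\vp\|^2+\|\psi_1\|^2+\|\psi_2\|^2$, yielding $|\mu|\le C(\eps_n+\|\psi_1\|^2+\|\psi_2\|^2)$. Now invoke the coercivity inputs: the non-degeneracy $Ker[\cl_+]=\mathrm{span}[\vp']$ together with $\cl_+|_{\{\vp\}^\perp}\ge 0$ gives $\cl_+|_{\mathrm{span}\{\vp,\vp'\}^\perp}\ge\kappa_1>0$, so $\langle\cl_+\eta_1,\eta_1\rangle\ge\kappa_1\|\eta_1\|_{H^{\alpha/2}}^2$; the hypothesis $\cl_-|_{\{\vp\}^\perp}\ge\kappa$ gives $\langle\cl_-\psi_2,\psi_2\rangle\ge\kappa\|\psi_2\|_{H^{\alpha/2}}^2$. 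Expanding $\langle\cl_+\psi_1,\psi_1\rangle=\mu^2\langle\cl_+\vp,\vp\rangle+2\mu\langle\cl_+\vp,\eta_1\rangle+\langle\cl_+\eta_1,\eta_1\rangle$ and absorbing the $\mu$-terms using the bound above, one arrives at
\[
\tfrac{\kappa_1}{2}\|\eta_1\|_{H^{\alpha/2}}^2+\tfrac{\kappa}{2}\|\psi_2\|_{H^{\alpha/2}}^2\le C\eps_n+C(\|\psi_1\|_{H^{\alpha/2}}^3+\|\psi_2\|_{H^{\alpha/2}}^3).
\]
Combining with $\|\psi_1\|_{H^{\alpha/2}}\le\|\eta_1\|_{H^{\alpha/2}}+|\mu|\|\vp\|_{H^{\alpha/2}}\le\|\eta_1\|_{H^{\alpha/2}}+C(\eps_n+\|\psi\|^2)$, and taking $\eps$ small enough to absorb the cubic term on the right, we conclude
\[
\|\psi_1(t)\|_{H^{\alpha/2}}^2+\|\psi_2(t)\|_{H^{\alpha/2}}^2\le D\eps_n
\qquad\text{for all }t\in(0,T_n^*).
\]
This is the same bootstrap estimate as \eqref{817}; once $n$ is taken large enough that $\sqrt{D\eps_n}<\eps/2$, the tube time $T_n^*$ must be infinite, contradicting the failure of stability.

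The main obstacle I expect is the clean setup of the two-parameter modulation $(\beta,\theta)$: one must verify that the implicit function theorem applies (the Jacobian of the map from symmetry parameters to orthogonality conditions is $\det\begin{pmatrix}\|\vp'\|^2&0\\0&\|\vp\|^2\end{pmatrix}\neq 0$, relying crucially on non-degeneracy $\vp'\neq 0$), and that the resulting orthogonality conditions dovetail with the spectral hypotheses to furnish coercivity on \emph{both} the real part (modulo $\vp$, which is handled by $\cp$-conservation) and the imaginary part. Everything else parallels the fKdV argument, and in particular no continuity of $\lambda\mapsto\vp_\lambda$ is required.
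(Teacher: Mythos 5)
Your proposal is correct and follows essentially the same route as the paper: a contradiction/bootstrap argument on a tube time $T_n^*$, decomposition of the perturbation into $\mu\vp+\eta$ (real part) and $\zeta$ (imaginary part) with $\eta\perp\{\vp,\vp'\}$, $\zeta\perp\vp$, coercivity of $\cl_\pm$ on those subspaces, and the conserved quantities $E=\ch+\tfrac{\om}{2}\cp$ and $\cp$ to close the estimate. The only (cosmetic) difference is in fixing the modulation parameters: you invoke a two-parameter implicit function theorem for $(\beta,\theta)$ jointly, whereas the paper applies the one-parameter translation lemma to the real part alone and then determines $\theta_n$ explicitly from $\dpr{w_n(\cdot+\be)}{\vp}=\sin(\theta_n)\|\vp\|^2$; both yield the same orthogonality conditions.
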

     
     \begin{proof}
     		Note first that the assumptions guarantee that there exists $\ka>0$, so that 
     		\begin{equation}
     		\label{z} 
     		\cl_+|_{span\{\vp, \vp'\}^\perp}\geq \ka, \cl_-|_{\{\vp\}^\perp}\geq \ka. 
     		\end{equation} 
     		The proof then proceeds again by a contradiction, as in Proposition \ref{prop:lw}. 
     		
     		Assuming that orbital stability fails, we conclude that there exists $\eps_0>0$ and a sequence of complex-valued initial data $u_n: \lim_n\|u_n-\vp\|_{H^{\f{\al}{2}}}=0$, so that for the corresponding solutions stay away from (a translate and modulated versions of) $\vp$. That is, 
     		\begin{equation}
     		\label{orb:nls10}
     		\sup_{0\leq t < \infty} \inf_{r,  \theta\in \rone,} \|u_n(t, \cdot) - e^{i \theta} \vp(\cdot-r)\|_{H^{\f{\al}{2}}}\geq \eps_0.
     		\end{equation}
     		Consider the set, for small enough $\eps$
     		$$
     		\cu_\eps:=\{u=v+i w: v, w\in H^{\f{\al}{2}}_{real}[-T,T]: \inf_r[\|v-\vp(\cdot-r)\|_{H^{\f{\al}{2}}}+ \|w\|_{H^{\f{\al}{2}}}]<\eps \}, 
     		$$
     		together with the well-defined  map $\be: \cu_\eps\to \rone$, so that 
     		\begin{equation}
     		\label{os1:nls} 
     		\dpr{v(\cdot+\be(v))}{\vp'}=0. 
     		\end{equation}
     	Letting again $E(u)=\ch(u)+\f{\om}{2} \cp(u)$ and $\eps_n:=|\ch(u_n(t))-\ch(\vp)|+|\cp(u_n(t))-\cp(\vp)|$, we observe again that $\eps_n$ is conserved and $\lim_n \eps_n=0$, since $\lim_n\|u_n-\vp\|_{H^{\f{\al}{2}}}=0$. Also, 
     	$$
     	E'[\vp]=\ch'[\vp]+\f{\om}{2} \cp'[\vp]=0.
     	$$
     We now define the appropriate translation and  modulation parameters.  The translation parameter is simply as before, $r_n(t):=\be(v_n(t))$, while the modulation parameter $\theta_n(t)$ is determined from the relation 
     \begin{equation}
     \label{600} 
     \dpr{w_n(t, \cdot+\be(v_n(t)))}{\vp}=\sin(\theta_n(t)) \|\vp\|_{L^2}^2,
     \end{equation}	
     Note that while $(v,w)\in \cu_\eps$, the expression on the left hand side of \eqref{600} is $O(\eps)$, so $\theta_n(t)$ is taken to be the unique small solution of \eqref{600}. More generally, under the {\it a priori} assumption that $u_n=v_n+i w_n$ belongs to the set $\cu_\eps$, which we will eventually uphold for all times $t$ under consideration, it follows   that both $r_n(t)=O(\eps), \theta_n(t)=O(\eps)$ are uniquely determined. 
  
     	Next,  fix small enough $\eps>0$, so that the map $\be:\cu_\eps\to \rone$ is well defined and   \eqref{os1:nls} holds.  By the continuity of the solution map and the $C^1$ property of the map $\be$,   there exists $t_n=t_n(\eps)>0$, so that 
     	$\sup_{t\in (0, t_n)} \|u_n(t, \cdot)-\vp\|<\eps$.  In particular,  
     	$$
     		\|v_n(t, \cdot)-\vp\|\leq \|u_n(t, \cdot)-\vp\|<\eps,
     		$$
     	whence $\be(v_n(t))$ is $O(\eps)$ close to $\be(\vp)=0$ and $\theta_n(t)=O(\eps)$. Thus,  
     	$$
     	|e^{i \theta_n(t)} -1| \|\vp\|_{H^{\f{\al}{2}} }<C_0\eps, \|\vp-\vp(\cdot-\be(v_n(t)))\|_{H^{\f{\al}{2}} }<C_0\eps,
     	$$
     	for some constant $C_0=C_0(\vp)$.   Thus, for $t\in (0, t_n)$, 
     	 \begin{eqnarray*}
     	 	\|u_n(t,\cdot+\be(v_n(t))- e^{i \theta_n(t)} \vp\|_{H^{\f{\al}{2}} } &\leq & \|u_n(t, \cdot)-\vp\|_{H^{\f{\al}{2}} }+ 
     	 	\|\vp-\vp(\cdot-\be(v_n(t)))\|_{H^{\f{\al}{2}} }+\\
     	 	&+& |e^{i \theta_n(t)} -1| \|\vp\|_{H^{\f{\al}{2}} }\leq (2C_0+1) \eps. 
     	 \end{eqnarray*}
     	Define 
     	$$
     	T_n^*=T_n^*(\eps):=\sup\{\tau_0:  \sup_{0<\tau<\tau_0}   \|u_n(\tau, \cdot+\be(u_n(\tau)))- 
     	e^{i \theta_n(\tau)}\vp(\cdot)\|_{H^\f{\al}{2}}<2(2C_0+1)\eps\},
     	$$
     	so that the previous calculation implies $T_n^*\geq t_n>0$. We will show that for all small enough $\eps$, there exists $N=N_\eps$, so that for all $n>N_\eps$, $T_n^*=\infty$. This is in  a contradiction with \eqref{orb:nls10}, by taking $\eps<<\eps_0$ and correspondingly  large $N_\eps$.   
     	
     	Write for $t\in (0, T_n^*)$, 
     	\begin{eqnarray*}
     		\psi_n(t) &:=& u_n(t,\cdot+\be(v_n(t))- e^{i \theta_n(t)} \vp=  \\
     		&=& 
     		v_n(t, \cdot+\be(v_n(t)))-\cos(\theta_n(t)) \vp+i (w_n(t, \cdot+\be(v_n(t)))-\sin(\theta_n(t)) \vp)
     	\end{eqnarray*}
     	Note that $t\in (0, T_n^*)$ implies  $\|\psi_n(t)\|_{H^{\f{\al}{2}}}<2(2C_0+1)\eps$.  Viewing $\psi_n(t)$  as a vector in the real and imaginary parts, we project over the vector $\left(\begin{array}{c} 
     	\vp \\ 0 
     	\end{array} \right)$ and its orthogonal complement, whence we obtain the representation  
     	\begin{equation}
     	\label{610} 
     	\left(\begin{array}{c} 
     	v_n(t, \cdot+\be(v_n(t)))-\cos(\theta_n(t)) \vp \\
     	w_n(t, \cdot+\be(v_n(t)))-\sin(\theta_n(t)) \vp 
     	\end{array} \right)=\mu_n(t) \left(\begin{array}{c} 
     	\vp \\ 0 
     	\end{array} \right)+\left(\begin{array}{c} 
     	\eta_n(t) \\ \zeta_n(t)
     	\end{array} \right),   \left(\begin{array}{c} 
     	\eta_n(t) \\ \zeta_n(t)
     	\end{array} \right) \perp \left(\begin{array}{c} 
     	\vp \\ 0 
     	\end{array} \right). 
     	\end{equation} 
     	By the construction of $\beta(v_n(t))$, we have by \eqref{os1:nls} that  $\dpr{v_n(\cdot+\beta(v_n(t)))}{\vp'(\cdot)}=0$, so taking dot product of the first equation in \eqref{610} with $\vp'$ yields 
     	$
     	\dpr{\eta_n(t)}{\vp'}=0,
     	$
     	or $\eta_n(t)\perp \vp'$. 
     	
  Furthermore, the condition 
  $\left(\begin{array}{c} 
   \eta_n(t) \\ \zeta_n(t)
   \end{array} \right) \perp \left(\begin{array}{c} 
   \vp \\ 0 
   \end{array} \right)$ is nothing,  but $\eta_n(t)\perp \vp$, so $\eta_n(t)\perp \vp, \vp'$. 
   In addition,  the choice of $\theta_n$ in \eqref{600} is equivalent to $w_n(t, \cdot+\be(v_n(t)))-\sin(\theta_n(t)) \vp \perp \vp$, which translates to exactly  
   $\zeta_n(t)  \perp \vp$. 
   It is then clear that 
   \begin{equation}
   \label{715} 
   	\cp(u_n(t)) = \cp(u_n(t,\cdot+\be(v_n(t))))= \cp(\vp)+ 2\mu_n(t) \cos(\theta_n(t)) \|\vp\|_{L^2}^2 +  \|\psi_n(t)\|_{L^2}^2.  
   \end{equation}
     	Taking into account $\theta_n(t)=O(\eps)$ (so $\cos(\theta_n(t))=1+O(\eps^2)$), we obtain from \eqref{715}, 
     	\begin{equation}
     	\label{620} 
     	|\mu_n(t)|\leq  \f{|\cp(u_n(t))-\cp(\vp)|+ \|\psi_n(t)\|_{L^2}^2}{2 \cos(\theta_n(t)) \|\vp\|_{L^2}^2} 
     	\leq C(\eps_n+\|\psi_n(t)\|_{L^2}^2)\leq C(\eps_n+\eps^2),
     	\end{equation}
     	where in the last inequality, we have used that $t\in (0, T_n^*)$. Note in addition, that taking $L^2$ norms in \eqref{610} and using the orthogonality relations yields 
     	\begin{equation}
     	\label{621} 
     	|\mu_n(t)|^2+\|\zeta_n(t)\|^2+ \|\eta_n\|^2=\|\psi_n(t)\|^2\leq C\eps^2,
\end{equation} 
     	Now, 
     	\begin{eqnarray*}
     & & 		E[u_n(t)]-E[\vp] = E[u_n(t,\cdot+\be(v_n(t)))]-E[ \vp]=
     		E[e^{i \theta_n(t)} \vp+\psi_n]-E[ \vp]=\\
     		&=& E[(\cos(\theta_n(t)) \vp+\mu_n \vp+\eta_n)+ i(\sin(\theta_n(t)) \vp+\zeta_n)]-E[ \vp].
     	\end{eqnarray*}
 Note 	
     		\begin{eqnarray*}
     			 & &  |(\cos(\theta_n ) \vp+\mu_n \vp+\eta_n)+ i(\sin(\theta_n) \vp+\zeta_n)|^2=
     			\vp^2+2 \cos(\theta_n) \vp(\mu_n \vp+\eta_n)+(\mu_n \vp+\eta_n)^2+\\
     			&+&2\sin(\theta_n) \vp\zeta_n +\zeta_n^2= \vp^2+2  \vp(\mu_n \vp+\eta_n)+(\mu_n \vp+\eta_n)^2+2\sin(\theta_n) \vp\zeta_n +\zeta_n^2+O(\eps^2(|\zeta_n|+|\eta_n|)). 
     		\end{eqnarray*}
     	 where we have used  $\cos(\theta_n(t))=1+O(\eps^2)$. By the relations \eqref{620}, \eqref{621},  $\mu_n^2+|\mu_n|\eta_n|\leq C(\eps_n+\eps^3)$. It follows that 
     			\begin{eqnarray*}
     				& &   E[(\cos(\theta_n(t)) \vp+\mu_n \vp+\eta_n)+ i(\sin(\theta_n(t)) \vp+\zeta_n)]-E[ \vp] = 
     				\dpr{\La^{\al} \vp}{\mu_n \vp+\eta_n}+  \f{1}{2}  \dpr{\La^{\al}  \eta_n }{\eta_n}+\\ 
     				&+&  \sin(\theta_n)\dpr{\La^\al \vp}{\zeta_n}+  \f{1}{2}  \dpr{\La^{\al}  \zeta_n }{\zeta_n}+\om\dpr{\vp}{\mu_n\vp+ \eta_n+\sin(\theta_n) \zeta_n}+\f{\om}{2}(\dpr{\eta_n}{\eta_n}+\dpr{\zeta_n}{\zeta_n})\\
     				&-& \mu_n\dpr{\vp^2}{\vp} - \dpr{\vp^2}{\eta_n} -\dpr{\vp}{\eta_n^2}-\sin(\theta_n)\dpr{\vp^2}{\zeta_n}-\f{1}{2}\dpr{\vp}{\zeta_n^2} + O(\eps_n+\eps^3). 
     			\end{eqnarray*}
     	By the profile equation, $\La^\al\vp+\om \vp- \vp^2=0$, we can simplify the expression above 
     	\begin{equation}
     	\label{718}
     	E[u_n(t)]-E[\vp]\geq \f{1}{2}[\dpr{\cl_+ \eta_n(t)}{\eta_n(t)}+\dpr{\cl_- \zeta_n(t)}{\zeta_n(t)}] - C(\eps^3+|\eps_n|).
     	\end{equation}
     	As we have pointed out, $\eta_n(t)\perp span\{\vp, \vp'\}$, $\zeta_n(t)\perp\vp$, so \eqref{z}  above   implies 
     	$$
     	\dpr{\cl_+ \eta_n}{\eta_n}+\dpr{\cl_- \zeta_n}{\zeta_n}\geq \ka[\|\eta_n\|_{H^{\f{\al}{2}}}^2+\|\zeta_n\|_{H^{\f{\al}{2}}}^2]
     	$$ 
     	We conclude, by taking into account $|E[u_n(t)]-E[\vp] |\leq \eps_n$, and $t\in (0,T_n^*)$
    \begin{equation}
    \label{812} 
    	\ka[\|\eta_n(t)\|_{H^{\f{\al}{2}}}^2+\|\zeta_n(t)\|_{H^{\f{\al}{2}}}^2]\leq C(\eps_n+\eps^3).
    \end{equation}
     	This implies however that for all $t\in (0, T_n^*)$, we have (again, using \eqref{620} for $\mu_n(t)$), 
     \begin{equation}
     \label{817} 
     	\|\psi_n(t)\|_{H^{\f{\al}{2}}}\leq C \sqrt{\eps_n}+ C\eps^{\f{3}{2}}
\end{equation}
     	But then, for sufficently small $\eps$ and for large enough $n$, we must have $T_n^*=\infty$.  Indeed, otherwise 
     	$$
     	C \sqrt{\eps_n}+ C\eps^{\f{3}{2}}\geq \limsup_{t\to T_n^*-} \|\psi_n(t)\|_{H^{\f{\al}{2}}}\geq C_1 \eps. 
     	$$
     	Such an inequality clearly will not hold by selecting $\eps: C\sqrt{\eps}<\f{C_1}{2}$ and then $n$ so large that $\sqrt{\eps_n}<<\eps$, which can be done since $\lim_n \eps_n=0$.  Thus a contradiction is reached and the waves are orbitally stable. 
     \end{proof}

\end{document}